\newcommand{\leqnomode}{\tagsleft@true}
\newcommand{\reqnomode}{\tagsleft@false}
\numberwithin{equation}{section}
\newtheorem{theorem}{Theorem}[section]
\newtheorem{lemma}[theorem]{Lemma}
\newtheorem{proposition}[theorem]{Proposition}
\newtheorem{cor}[theorem]{Corollary}
\newtheorem{remark}[theorem]{Remark}
\newtheorem{definition}[theorem]{Definition}
\renewcommand{\rightarrow}{\to}
\providecommand{\ln}{\mathop{\rm ln}\nolimits}
\newcommand{\dive}{\mathrm{div}}
\newcommand{\w}{W_0^{1,\Phi}(\Omega)}
\title[Revised regularity results for quasilinear elliptic problems]{Revised regularity results for quasilinear elliptic problems driven by the $\Phi$-Laplacian operator}
\author[E.D. Silva]{E. D. Silva}
\author[M.L. Carvalho]{M. L. Carvalho}
\author[J.C. \ de Albuquerque]{J. C. de Albuquerque}
\address[E.D. Silva]{Department of Mathematics, Federal University of Goi\'{a}s}
\email{\href{mailto:eddomingos@hotmail.com}{eddomingos@hotmail.com}}
\address[M.L. Carvalho]{Department of Mathematics, Federal University of Goi\'{a}s
Federal University of Goi\'{a}s
\newline\indent
74001-970, Goi\'{a}s-GO, Brazil}
\email{\href{marcos_leandro_carvalho@ufg.br}{marcos$\_$leandro$\_$carvalho@ufg.br}}
\address[J.C. de~Albuquerque]{Department of Mathematics, Federal University of Goi\'{a}s}
\email{\href{mailto:joserre@gmail.com}{joserre@gmail.com}}
\thanks{Corresponding author: M. L. Carvalho.}
\thanks{Research supported in part by INCTmat/MCT/Brazil, CNPq and CAPES/Brazil. The authors was partially supported by Fapeg/CNpq grants 03/2015-PPP}
\subjclass[2010]{35B65,35B09,35D30}
\keywords{Regularity results, Quasilinear elliptic problems, Moser iteration, Nonhomogeneous operators.}
\begin{document}
	

\begin{abstract}
 	It is establish regularity results for weak solutions of quasilinear elliptic problems driven by the well known $\Phi$-Laplacian operator given by  
 	  \begin{equation*}
 	 \left\{\
 	 \begin{array}{cl}
 	 \displaystyle-\Delta_\Phi u= g(x,u), & \mbox{in}~\Omega,\\
 	 u=0, & \mbox{on}~\partial \Omega,
 	 \end{array}
 	 \right.
 	 \end{equation*}
 	where $\Delta_{\Phi}u :=\mbox{div}(\phi(|\nabla u|)\nabla u)$ and $\Omega\subset\mathbb{R}^{N}, N \geq 2,$ is a bounded domain with smooth boundary $\partial\Omega$. Our work concerns on nonlinearities $g$ which can be homogeneous or non-homogeneous. For the homogeneous case we consider an existence result together with a regularity result proving that any weak solution remains bounded.  Furthermore, for the non-homogeneous case, the nonlinear term $g$ can be subcritical or critical proving also that any weak solution is bounded. The proofs are based on Moser's iteration in Orclicz and Orlicz-Sobolev spaces. 
\end{abstract}

\maketitle


\section{Introduction}	
In this work we establish regularity results for weak solutions of the quasilinear elliptic problems driven by the $\Phi$-Laplacian operator described in the following form
 \begin{equation}\label{prob}
 \left\{\
 \begin{array}{cl}
 \displaystyle-\Delta_\Phi u= g(x,u), & \mbox{in}~\Omega,\\
 u=0, & \mbox{on}~\partial \Omega,
 \end{array}
 \right. \tag{$P$}
 \end{equation}
where $\Omega\subset\mathbb{R}^{N}$ is bounded domain with smooth boundary $\partial\Omega$, $\Delta_{\Phi} u:=\mbox{div}(\phi(|\nabla u|)\nabla u)$ is the $\Phi$-Laplacian operator and $g:\Omega\times\mathbb{R}\rightarrow\mathbb{R}$ is a Carath\'eodory function satisfying suitable assumptions. Throughout this work we shall consider $\Phi:\mathbb{R}\rightarrow\mathbb{R}$ an even function defined by
\begin{equation}\label{phi}
\Phi(t)=\int_0^ts\phi(s)\,\mathrm{d} s.
\end{equation}
The function $\phi:\mathbb{R}\rightarrow\mathbb{R}$ is a $C^1$-function satisfying the following assumptions:
\begin{itemize}
	\item[($\phi_1$)] $t\phi(t)\mapsto 0$, as $t\mapsto 0$ and $t\phi(t)\mapsto\infty$, as $t\mapsto\infty$;
	\item[($\phi_2$)] $t\phi(t)$ is strictly increasing in $(0, \infty)$;
	\item[($\phi_3$)] there exist $\ell\in[1,N)$ and $m\in(1,N)$ such that
	\[
	\ell-1\leq \frac {(t\phi(t))^\prime}{\phi(t)}\leq m-1<\ell^*-1, \quad \mbox{for all} \hspace{0,2cm} t>0.
	\]
\end{itemize}
Due to the nature of the non-homogeneous differential operator $\Phi$-Laplacian, we shall work in the framework of Orlicz and Orlicz-Sobolev spaces. For the reader's convenience, we provide an Appendix with a brief revision on the Orlicz space setting. It is worthwhile
to mention that the Orlicz space $L_{\Phi}(\Omega)$ is a generalization of the Lebesgue space $L^{p}(\Omega)$. It is well known that the Orlicz-Sobolev space $W^{1,\Phi}(\Omega)$ is a generalization of the classical Sobolev space $W^{1,p}(\Omega)$. Hence, several properties of the Sobolev spaces have been extended to Orlicz-Sobolev spaces. The main interest regarding Orlicz-Sobolev spaces is motivated by their applicability in many fields of mathematics, such as partial differential equations, calculus of variations, non-linear potential theory, differential geometry, geometric function theory, the theory of quasiconformal mappings, probability theory, non-Newtonian fluids, image processing, among others, see \cite{adams,Fuk_2,Fuk_1,rao}. The class of problems introduced in \eqref{prob} is related with several branch of physics which are based on the nature of the non-homogeneous nonlinearity $\Phi$. For instance we cite the following examples:
\begin{itemize}
	\item[(i)] Nonlinear elasticity: $\Phi(t)=(1+t^{2})^{\gamma}-1$, $1<\gamma<N/(N-2)$;
	\item[(ii)] Plasticity: $\Phi(t)=t^{\alpha}(\log(1+t))^{\beta}$, $\alpha\geq1$, $\beta>0$;
	\item[(iii)] Non-Newtonian fluid: $\Phi(t)=\frac{1}{p}|t|^{p}$, for $p>1$;
	\item[(iv)] Plasma physics: $\Phi(t)=\frac{1}{p}|t|^{p}+\frac{1}{q}|t|^{q}$, where $1<p<q<N$ with $q\in(p,p^{*})$;
	\item[(v)] Generalized Newtonian fluids: $\Phi(t)=\int_{0}^{t}s^{1-\alpha}[\sinh^{-1}(s)]^{\beta}\,\mathrm{d}s$, $0\leq\alpha\leq 1$, $\beta>0$.
\end{itemize}
In the example (iii), the function $\Phi$ gives the so called $p$-Laplacian and Problem \eqref{prob} can be read as
  \begin{equation*}
 \left\{\
 \begin{array}{cl}
 \displaystyle-\Delta_{p} u = g(x,u), & \mbox{in}~\Omega,\\
 u=0, & \mbox{on}~\partial \Omega,
 \end{array}
 \right.
 \end{equation*}
In similar way, in the example (iv), the function $\Phi$ provides the named $(p,q)$-Laplacian operator and  Problem \eqref{prob} can be rewritten in the following form
   \begin{equation*}
 \left\{\
 \begin{array}{cl}
 \displaystyle-\Delta_{p} u-\Delta_{q} u = g(x,u), & \mbox{in}~\Omega,\\
 u=0, & \mbox{on}~\partial \Omega.
 \end{array}
 \right.
 \end{equation*}
 It is worthwhile to recall that $\Phi$ satisfies the so called $\Delta_2$-condition whenever  
 \begin{equation*}
 \Phi(2 t) \leq C \Phi(t),\ t \geq t_{0},
 \end{equation*}
 holds true for some $C > 0$ and for some $t_{0} \geq 0$. In short, we write $\Phi \in \Delta_2$. One feature on this work is to consider regularity results for quasilinear elliptic problem driven by the $\Phi-$Laplacian operator where the so called $\Delta_{2}$-condition is not satisfied for $\tilde{\Phi}$, that is, the conjugate function defined by
 $$
 \widetilde{\Phi}(t) = \displaystyle \max_{s \geq 0} \{ts - \Phi(s) \},~ t \geq 0,
 $$
 does not verifies the $\Delta_2$-condition. It is important to emphasize that the $\Phi$- Laplacian operator is not homogeneous which bring us several difficulties in order to get the boundedness of a weak solution to the elliptic Problem \eqref{prob}. Moreover, the Orlicz space can be different from any usual Lebesgue spaces, for instance, when $\Phi(t)=t^{\alpha}(\log(1+t))^{\beta}$ for some $\alpha \geq 1$ and $\beta > 0$. For more details about non-homogeneous differential operators with different types of nonlinearity $\Phi$ we refer the readers to \cite{dibene,rad1,rad2,rad3,Fuk_2} and references therein. 

There is a huge bibliography concerned on regularity results for problems related to \eqref{prob}. We refer the readers to interesting works \cite{pucci2,cianchi,FG,lieberman,fang,dibene}. There are many applications of regularity theory for quasilinear elliptic problem defined on bounded domains. For instance, an application of our regularity results is a version of the strong maximum principle for the quasilinear elliptic problems given by Problem \eqref{prob}, see Theorem \ref{smp} ahead. Another interesting application arises from the study of existence of solutions which satisfies a multivalued elliptic equation in an ``almost everywhere" sense. More specifically, let $u\in W^{1,\Phi}_{loc}(\Omega)$ be a solution of Problem \eqref{prob} in such way that $[\nabla u=0]$ is the associated singular set. The regularity of the solution may be used to prove that the Lebesgue measure of the singular set is null. This type of result was proved, for example, by H. Lou \cite{Lou} only for the case $\Phi(t):=t^p/p,~1<p<\infty$. By using this fact, one can conclude that a solution for a multivalued problem satisfies the equation almost everywhere.  The same argument can be used for the $\Phi$-Laplacian operator thanks to the fact that any weak solution for the Problem \eqref{prob} remains bounded. This assertion is the most feature in the present work.  It is also important to emphasize that any weak solution for the Problem \eqref{prob} can be not a local minimum for the energy functional associated to the problem \eqref{prob}. There exist several regularity results concerning only for local minimizers for a suitable energy functional. On this subject we refer the readers to \cite{Alves1} and references therein. For further results concerning on related results for quasilinear elliptic problems involving nonhomogeneous operators we refer the reader to  Fiscella and Pucci \cite{FP}. In the present work is not needed to assume that a weak solution for Problem \eqref{prob} is a minimum for the energy functional. Then our work complements/extends the aforementioned works.  

The main contribution on this work is to guarantee some regularity results for quasilinear elliptic equations driven by the $\Phi$-Laplacian operator for the homogeneous and non-homogeneous case. The main feature is to ensure that any weak solution for the Problem \eqref{prob} are necessarily bounded. More precisely, we shall consider quasilinear elliptic problem given by the $\Phi$-Laplacian operator showing regularity results taking into account a truncation technique together with the Moser's iteration. For the homogeneous case, we study regularity of solutions for following quasilinear elliptic problem
  \begin{equation}\label{probhomo}
 \left\{\
 \begin{array}{cl}
 \displaystyle-\Delta_\Phi u= f(x), & \mbox{in}~\Omega,\\
 u= 0, & \mbox{on}~\partial \Omega,
 \end{array}
 \right. \tag{$P_h$}
 \end{equation}
where $f\in L^{q}(\Omega)$, with $q>N/\ell$ and $\ell>1$. It is worthwhile to recall that $u\in W^{1,\Phi}_{0}(\Omega)$ is said to be a weak solution for the quasilinear elliptic Problem~\eqref{probhomo} if there holds
\[
\int_{\Omega}\Phi(|\nabla u|)\nabla u\nabla v\,\mathrm{d}x=\int_{\Omega}f(x)v\,\mathrm{d}x, \quad \mbox{for all} \hspace{0,2cm} v\in W^{1,\Phi}_{0}(\Omega).
\]

\begin{definition}
	Let  $\Phi, \Psi$ be two $N$-functions. We say that $\Phi$ and $\Psi$ are equivalent, in short 
	$\Phi \approx \Psi$, when there exist $c_{1}, c_{2} > 0$ in such way that $c_{1} \Psi(t) \leq \Phi(t) \leq c_{2} \Psi(t)$ for any $ t \geq t_{0}$ and for some $t_{0} \geq 0$. Moreover, we write $\Phi \not\approx \Psi$ whenever $\Phi$ and $\Psi$ are not equivalent.
\end{definition}

Our first main result can be stated as follows:

\begin{theorem}\label{A}
	Suppose that $(\phi_{1})-(\phi_{3})$ hold with $\ell\in(1,N)$. Then, Problem \eqref{probhomo} possesses a positive solution $u\in \w$. Moreover, assume that one of the following hypotheses holds:
	 \begin{itemize}
	 	\item[(i)] $\Phi\not\approx t^{m}$ and $f\in L^{q}(\Omega)$ is nonnegative with $q>N/\ell$;
	 	\item[(ii)] $\Phi\approx t^{m}$ and $f\in L^{q}(\Omega)$ is nonnegative with $q>N/m$.
	 \end{itemize}	
    Then the weak solution $u$ for the problem \eqref{probhomo} belongs to $L^{\infty}(\Omega)$.
\end{theorem}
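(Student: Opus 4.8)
The plan is to treat the two assertions of the theorem separately: first the existence (and positivity) of a weak solution, then its boundedness, the latter being the substantive point.

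For existence I would minimise the energy functional
\begin{equation*}
J(u)=\int_{\Omega}\Phi(|\nabla u|)\,\mathrm{d}x-\int_{\Omega}f(x)u\,\mathrm{d}x,\qquad u\in\w .
\end{equation*}
By $(\phi_{3})$ one has the elementary consequences $\ell\,\Phi(t)\le t^{2}\phi(t)=t\Phi'(t)\le m\,\Phi(t)$ and $\xi_{0}(\rho)\Phi(t)\le\Phi(\rho t)\le\xi_{1}(\rho)\Phi(t)$ with $\xi_{0}(\rho)=\min\{\rho^{\ell},\rho^{m}\}$, $\xi_{1}(\rho)=\max\{\rho^{\ell},\rho^{m}\}$ (recalled in the Appendix); in particular $\Phi$ grows at least like $t^{\ell}$, so together with the continuous embedding $\w\hookrightarrow L^{\ell}(\Omega)$ and the Poincar\'e inequality the functional $J$ is coercive, while convexity of $\Phi$ gives its weak lower semicontinuity. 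Hence $J$ attains a global minimum at some $u\in\w$, which is a weak solution; it is the unique one because $-\Delta_{\Phi}$ is strictly monotone by $(\phi_{2})$. Testing the equation with $v=\min\{u,0\}\le 0$ and using $f\ge 0$ forces $\min\{u,0\}\equiv 0$, so $u\ge 0$, and (when $f\not\equiv 0$) strict positivity follows from a strong maximum principle argument.

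The boundedness is obtained by a Moser iteration adapted to the Orlicz setting. For $L>0$ and an exponent $p\ge 0$ to be iterated, put $u_{L}:=\min\{u,L\}$ and use the admissible test function $v:=u\,u_{L}^{p}\in\w$; one checks $|\nabla v|\le(p+1)u_{L}^{p}|\nabla u|$, with $\nabla v=(p+1)u_{L}^{p}\nabla u$ on $[u<L]$ and $\nabla v=L^{p}\nabla u$ on $[u\ge L]$. Inserting $v$ in the weak formulation, bounding the left-hand side below by $\ell\int_{\Omega}u_{L}^{p}\Phi(|\nabla u|)\,\mathrm{d}x$ via $t^{2}\phi(t)\ge\ell\Phi(t)$, and the right-hand side above by H\"older's inequality with exponents $q$ and $q'=q/(q-1)$, one arrives at
\begin{equation*}
\int_{\Omega}u_{L}^{p}\,\Phi(|\nabla u|)\,\mathrm{d}x\le C\,(p+1)\,\|f\|_{L^{q}(\Omega)}\Big(\int_{\Omega}\big(u\,u_{L}^{p}\big)^{q'}\mathrm{d}x\Big)^{1/q'}.
\end{equation*}
Since $\Phi(t)\ge c\,t^{\ell}$ for $t\ge 1$, the left-hand side controls $c\int_{\Omega}|\nabla(u\,u_{L}^{p/\ell})|^{\ell}\mathrm{d}x$ up to lower-order terms and a factor polynomial in $p$ (here $\xi_{0},\xi_{1}$ are used to move $\Phi$ through the powers of $u_{L}$), and the Sobolev inequality $W_{0}^{1,\ell}(\Omega)\hookrightarrow L^{\ell^{*}}(\Omega)$ then bounds $\|u\,u_{L}^{p/\ell}\|_{L^{\ell^{*}}(\Omega)}$. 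In case (ii), where $\Phi\approx t^{m}$, one works instead with $W_{0}^{1,m}(\Omega)\hookrightarrow L^{m^{*}}(\Omega)$, which is exactly the stronger embedding that relaxes the hypothesis to $q>N/m$.

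Because $q>N/\ell$ is equivalent to $q'<\ell^{*}/\ell$ (resp.\ $q>N/m$ to $q'<m^{*}/m$), the quantity $\chi:=\ell^{*}/(\ell q')>1$ (resp.\ $\chi:=m^{*}/(mq')>1$) gives a genuine gain of integrability. Setting $\gamma_{0}$ by $\gamma_{0}q'=\ell^{*}$ and $\gamma_{n+1}=\chi\gamma_{n}$, the previous display feeds into the recursion
\begin{equation*}
\|u_{L}\|_{L^{\gamma_{n+1}q'}(\Omega)}\le(C\gamma_{n})^{1/\gamma_{n}}\,\|u_{L}\|_{L^{\gamma_{n}q'}(\Omega)},
\end{equation*}
with $C$ independent of $L$ and $n$. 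Iterating and using that $\sum_{n}\gamma_{n}^{-1}\log(C\gamma_{n})<\infty$, since $\gamma_{n}\to\infty$ geometrically, produces $\|u_{L}\|_{L^{\infty}(\Omega)}\le C\|u\|_{L^{\ell^{*}}(\Omega)}$ uniformly in $L$; letting $L\to\infty$ yields $u\in L^{\infty}(\Omega)$. I expect the main obstacle to be the non-homogeneity of $\Delta_{\Phi}$: unlike for the $p$-Laplacian one cannot factor powers of $u$ out of $\Phi(|\nabla(\cdot)|)$, so each passage of $\Phi$ through a power of the truncation must go through the two-sided estimates $\xi_{0},\xi_{1}$, and one must track precisely how the constants depend on the iteration exponent $p$ so that the product $\prod_{n}(C\gamma_{n})^{1/\gamma_{n}}$ converges. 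The secondary delicate point is that the Sobolev exponent governing the gain is $\ell^{*}$ in general but improves to $m^{*}$ precisely when $\Phi\approx t^{m}$, which is the source of the dichotomy in the statement and is handled by choosing the appropriate target space in the embedding step.
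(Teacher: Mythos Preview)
Your strategy is correct and yields the result, but it differs from the paper's in both parts. For existence the paper does not minimise the energy; instead it truncates the datum, $f_n:=\min\{f,n\}$, solves $-\Delta_\Phi u_n=f_n$ to obtain a monotone sequence $0<u_1\le u_2\le\cdots$ with each $u_n\in C^{1,\alpha_n}(\overline\Omega)$, and shows $u_n\to u$ in $\w$ via the $(S_+)$ property. This sequence is then reused in the regularity step: the Moser iteration is run on $u_n$ (uniformly in $n$) with test function $u_n^{\beta_k/q'}$, which is automatically admissible because $u_n$ is classical, and the bound $\|u_n\|_\infty\le e^{d_0}$ passes to the limit. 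A device specific to the paper is the control of the lower-order term $\int_\Omega u_n^{\beta_k/q'-1}$ arising from the set $\{|\nabla u_n|<1\}$ (where the estimate $\Phi(t)\ge\Phi(1)t^\ell$ is unavailable): this is handled through the pointwise lower bound $u_n\ge u_1>0$, giving $\|u_n\|_{\beta_k}^{-1}\le C\|u_1\|_1^{-1}$. Your truncation of the solution is the more textbook route and avoids the auxiliary sequence entirely; the paper's route buys automatic admissibility of power test functions and a clean treatment of that lower-order term. One small wrinkle in your write-up: with $v=u\,u_L^{p}$ the right-hand side $\|u\,u_L^{p}\|_{q'}$ is not dominated by a norm of $u_L$ alone (since $u\ge u_L$ on $\{u\ge L\}$), so the recursion cannot be kept purely in $u_L$ with $L$ fixed throughout; one either lets $L\to\infty$ at each step (so that the iterated quantity is $\|u\|_{\gamma_nq'}$, consistent with your final bound $\|u_L\|_\infty\le C\|u\|_{\ell^*}$), or uses $v=u_L^{\kappa}$ instead, which does close in $u_L$.
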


\begin{remark}
	It is well known that Problem \eqref{probhomo} admits solution, see \cite{gossez-Czech,Fuk_1} and references therein. However, we give an alternative proof for the existence of solution by constructing a monotone sequence of solutions for truncated problems which converges to a solution of \eqref{probhomo}. This sequence will be used to define a suitable test function in the Moser iteration method. As a consequence we show that Problem \eqref{probhomo} admits a bounded solution. Precisely, thanks to the strictly monotonicity of the $\Phi-$Laplacian operator, the solution is unique. 
\end{remark}

We are also concerned with regularity results for the non-homogeneous problem \eqref{prob}. This case extends Theorem~\ref{A} in some directions. For instance, we study the regularity of solutions when $q=N/\ell$ in a suitable sense. Moreover, we consider a Carath\'eodory function $g:\Omega\times\mathbb{R}\rightarrow\mathbb{R}$ satisfying subcritical or critical growth. Let $\alpha\in\{\ell,m\}$ be a fixed number. For the subcritical case we suppose that 
 \begin{equation}\label{sub}
 |g(x,t)|\leq a(x)(1+|t|^{\alpha-1}), \quad \mbox{for all} \hspace{0,2cm} (x,t)\in \Omega\times\mathbb{R},
 \end{equation}
where $a\in L^{N/\alpha}(\Omega)$. It is also usual to consider the following subcritical behavior for $g$ given by 
\begin{equation}\label{ellsub}
|g(x,t)|\leq C(|t|^{\alpha-1}+|t|^{r-1}), \quad \mbox{for all} \hspace{0,2cm} (x,t)\in \Omega\times\mathbb{R}.
\end{equation}
where $\alpha < r < \alpha^*$. For the critical case, we assume that there exists $C>0$ such that
 \begin{equation}\label{ell}
 |g(x,t)|\leq C(|t|^{\alpha-1}+|t|^{\alpha^{*}-1}), \quad \mbox{for all} \hspace{0,2cm} (x,t)\in \Omega\times\mathbb{R}.
 \end{equation}
 
Now we state our main result regarding to the non-homogeneous case.

\begin{theorem}\label{B}
	Suppose that $(\phi_{1})-(\phi_{3})$ hold. Let $u\in W^{1,\Phi}_{0}(\Omega)$ be a weak solution for Problem~\eqref{prob}. Assume that \eqref{sub} holds true. Assume also that one of the following hypotheses is verified:
	\begin{itemize}
		\item[(i)] $\Phi\not\approx t^{m}$ and $\alpha=\ell$;		
		\item[(ii)] $\Phi\approx t^{m}$ and $\alpha=m$;
	\end{itemize}	
	Then the solution $u$ is in $L^{q}(\Omega)$ for all $q\in[1,\infty)$.
\end{theorem}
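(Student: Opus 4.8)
The plan is to run a Brezis--Kato--Moser iteration adapted to Orlicz--Sobolev spaces, testing \eqref{prob} against truncated powers of $u$. Write $\alpha\in\{\ell,m\}$ for the exponent singled out by hypothesis (i) or (ii). First I would collect the consequences of $(\phi_{3})$ that the argument rests on. Integrating the inequality in $(\phi_{3})$ yields $\phi(1)\,t^{\ell-1}\le t\phi(t)\le\phi(1)\,t^{m-1}$ for $t\ge1$ (with the inequalities reversed for $0<t\le1$) and, as usual, $\ell\,\Phi(t)\le t^{2}\phi(t)\le m\,\Phi(t)$ for all $t>0$. Combined with the case assumption this gives a coercivity estimate of the form $t^{2}\phi(t)\ge c_{\alpha}\bigl(t^{\alpha}-1\bigr)$ for all $t\ge0$ --- directly from $t\phi(t)\ge\phi(1)t^{\ell-1}$ when $\alpha=\ell$, and from $t^{2}\phi(t)\ge\ell\Phi(t)$ together with $\Phi\approx t^{m}$ when $\alpha=m$ --- together with the continuous embedding $\w\hookrightarrow L^{\alpha^{*}}(\Omega)$ and, in particular, $u\in W^{1,\alpha}_{0}(\Omega)$. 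Thus $u\in L^{t_{0}}(\Omega)$ with $t_{0}:=\alpha^{*}$.

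For the inductive step assume $u\in L^{t_{n}}(\Omega)$ with $t_{n}\ge\alpha^{*}$ and put $\gamma:=t_{n}/\alpha\ge1$. For $L>1$ set $w_{L}:=\min\{|u|,L\}$, $\widetilde{w}_{L}:=|u|\,w_{L}^{\gamma-1}$, and take $v_{L}:=u\,w_{L}^{(\gamma-1)\alpha}$ as a test function; the bounds $|v_{L}|\le L^{(\gamma-1)\alpha}|u|$ and $|\nabla v_{L}|\le(1+(\gamma-1)\alpha)L^{(\gamma-1)\alpha}|\nabla u|$ show $v_{L}\in\w$. Splitting the computation of $\nabla v_{L}$ over $\{|u|<L\}$ and $\{|u|\ge L\}$ and inserting the coercivity estimate, the left-hand side of the weak formulation is bounded below by $c_{\gamma}\int_{\Omega}|\nabla\widetilde{w}_{L}|^{\alpha}\,\mathrm{d}x-C_{1}$, where $C_{1}$ depends only on $\gamma$, $|\Omega|$ and $\|u\|_{L^{t_{n}}}$; here the truncation tail $L^{(\gamma-1)\alpha}|\{|u|\ge L\}|$ is harmless because $(\gamma-1)\alpha-t_{n}=-\alpha<0$, so Chebyshev's inequality keeps it bounded as $L\uparrow\infty$. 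The Sobolev inequality in $W^{1,\alpha}_{0}(\Omega)$ then turns this into a lower bound $c_{\gamma}'\,\|\widetilde{w}_{L}\|_{L^{\alpha^{*}}}^{\alpha}-C_{1}$ with $c_{\gamma}'>0$.

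On the right-hand side, \eqref{sub} gives $|g(x,u)\,v_{L}|\le a(x)\bigl(|u|+|u|^{\alpha}\bigr)w_{L}^{(\gamma-1)\alpha}$, and the two pointwise facts $|u|^{\alpha}w_{L}^{(\gamma-1)\alpha}=\widetilde{w}_{L}^{\alpha}$ and $|u|\,w_{L}^{(\gamma-1)\alpha}\le1+\widetilde{w}_{L}^{\alpha}$ (check this on $\{|u|\ge1\}$ and $\{|u|<1\}$ separately) reduce it to $\int_{\Omega}g(x,u)v_{L}\,\mathrm{d}x\le2\int_{\Omega}a\,\widetilde{w}_{L}^{\alpha}\,\mathrm{d}x+C_{2}$, with $C_{2}=C_{2}(\|a\|_{L^{N/\alpha}},|\Omega|)$. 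Now the Brezis--Kato splitting enters: since $a\in L^{N/\alpha}(\Omega)$, for every $\delta>0$ we may write $a=a_{1}+a_{2}$ with $\|a_{1}\|_{L^{N/\alpha}}<\delta$ and $a_{2}\in L^{\infty}(\Omega)$, and then, using H\"{o}lder with exponents $N/\alpha$ and $(N/\alpha)'$ (so that $\alpha(N/\alpha)'=\alpha^{*}$) and $\widetilde{w}_{L}\le|u|^{\gamma}$,
\[
\int_{\Omega}a\,\widetilde{w}_{L}^{\alpha}\,\mathrm{d}x\le\delta\,\|\widetilde{w}_{L}\|_{L^{\alpha^{*}}}^{\alpha}+\|a_{2}\|_{\infty}\,\|u\|_{L^{t_{n}}}^{t_{n}}.
\]
Putting the two sides together, choosing $\delta$ small enough (depending on $\gamma$, hence on $n$, which is admissible since we only seek finite exponents) so that $\delta\|\widetilde{w}_{L}\|_{L^{\alpha^{*}}}^{\alpha}$ may be absorbed on the left, one obtains a bound on $\|\widetilde{w}_{L}\|_{L^{\alpha^{*}}}$ independent of $L$; letting $L\uparrow\infty$ and using monotone convergence ($\widetilde{w}_{L}\uparrow|u|^{\gamma}$) yields $u\in L^{\gamma\alpha^{*}}(\Omega)=L^{t_{n+1}}(\Omega)$ with $t_{n+1}=\frac{N}{N-\alpha}\,t_{n}$. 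Since $\frac{N}{N-\alpha}>1$ we get $t_{n}\to\infty$, and since $\Omega$ is bounded this proves $u\in L^{q}(\Omega)$ for every $q\in[1,\infty)$.

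The main difficulty is the borderline integrability $a\in L^{N/\alpha}$: the term $\int_{\Omega}a\,\widetilde{w}_{L}^{\alpha}\,\mathrm{d}x$ scales exactly like the Sobolev energy $\|\widetilde{w}_{L}\|_{L^{\alpha^{*}}}^{\alpha}$, so it cannot be controlled by a universal small constant and one is forced to decompose $a$ with a threshold that deteriorates along the iteration --- this is precisely why the conclusion is $L^{q}$ for all finite $q$ rather than $L^{\infty}$. The accompanying technical point, due to the non-homogeneity of $\Delta_{\Phi}$, is the bookkeeping of the truncation: since $\phi(|\nabla u|)|\nabla u|^{2}$ is comparable to $|\nabla u|^{\alpha}$ only up to an additive constant and the test function is only Lipschitz in $u$, both the lower bound on the left-hand side and the pointwise estimate $|u|\,w_{L}^{(\gamma-1)\alpha}\le1+\widetilde{w}_{L}^{\alpha}$ must be organized so that every remainder is a priori finite (via $u\in L^{t_{n}}$) and uniform in $L$.
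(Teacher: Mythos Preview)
Your proposal is correct and follows essentially the same Brezis--Kato--Moser scheme as the paper's own proof: both use the truncated power $u\min\{|u|^{\alpha s},L^{\alpha}\}$ as test function, both exploit that $a\in L^{N/\alpha}$ allows the critical term $\int a\,|u\min\{|u|^{s},L\}|^{\alpha}$ to be absorbed after splitting (the paper cuts at a level $\{|a|\ge k\}$, you decompose $a=a_{1}+a_{2}$ with $\|a_{1}\|_{N/\alpha}$ small; these are equivalent), and both iterate with the same ratio $t_{n+1}=\frac{N}{N-\alpha}\,t_{n}$. The only organisational difference is in handling the non-homogeneity of $\phi$: the paper splits the gradient integral over $\{|\nabla u|\ge1\}$ and $\{|\nabla u|<1\}$ and bounds the small-gradient piece via its Lemma~\ref{est}, whereas you pack this into the single pointwise inequality $t^{2}\phi(t)\ge c_{\alpha}(t^{\alpha}-1)$ and carry the additive constant through; the resulting error terms are the same.
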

It is important to point out that Theorem \ref{B} can be viewed as a generalization of the well known celebrated result of Brezis-Kato \cite{BK}. As a consequence, using a critical or subcritical behavior for $g$, we can state the following regularity result:
\begin{cor}\label{cor1}
	Suppose that $(\phi_{1})-(\phi_{3})$ hold. Let $u\in W^{1,\Phi}_{0}(\Omega)$ be a weak solution for Problem~\eqref{prob}. Assume that \eqref{ellsub} or  \eqref{ell}  holds. Assume also that one of the following hypotheses is satisfied:
	 \begin{itemize}
	 	\item[(i)] $\Phi\not\approx t^{m}$ and $\alpha=\ell$;		
	 	\item[(ii)] $\Phi\approx t^{m}$ and $\alpha=m$;
	 \end{itemize}   
 Then the weak solution $u$ belongs to $L^{q}(\Omega)$ for all $q\in[1,\infty)$.
\end{cor}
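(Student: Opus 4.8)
The plan is to derive Corollary \ref{cor1} from Theorem \ref{B} by absorbing the surplus growth in \eqref{ellsub} and \eqref{ell} into a weight $a$ that is permitted to depend on the fixed weak solution $u$. The point I would use is that the Moser iteration behind Theorem \ref{B} sees the nonlinearity only through the value $g(x,u(x))$ appearing in the weak formulation, so it is enough to exhibit $a \in L^{N/\alpha}(\Omega)$ with $|g(x,u(x))| \le a(x)\bigl(1+|u(x)|^{\alpha-1}\bigr)$ for a.e. $x \in \Omega$ and then repeat that argument unchanged.

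First I would treat \eqref{ellsub}: writing $|g(x,t)| \le C\bigl(|t|^{\alpha-1}+|t|^{r-1}\bigr)$ with $\alpha < r < \alpha^{*}$, I set $a(x) := C\bigl(1+|u(x)|^{r-\alpha}\bigr)$, so that
\[
a(x)\bigl(1+|u(x)|^{\alpha-1}\bigr) = C\bigl(1+|u(x)|^{\alpha-1}+|u(x)|^{r-\alpha}+|u(x)|^{r-1}\bigr) \ge \bigl|g(x,u(x))\bigr| \quad \text{a.e. in }\Omega.
\]
Since $\Omega$ is bounded, $a \in L^{N/\alpha}(\Omega)$ amounts to $u \in L^{(r-\alpha)N/\alpha}(\Omega)$, and an elementary computation gives $(r-\alpha)N/\alpha < \alpha^{*}$ precisely because $r < \alpha^{*}$; combined with the Orlicz--Sobolev embedding $W^{1,\Phi}_{0}(\Omega) \hookrightarrow L^{\alpha^{*}}(\Omega)$, valid under $(\phi_{1})$--$(\phi_{3})$ in both cases (i) and (ii), this yields $a \in L^{N/\alpha}(\Omega)$, and Theorem \ref{B} applies. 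For the critical condition \eqref{ell} I would run the same argument with $r = \alpha^{*}$ and $a(x) := C\bigl(1+|u(x)|^{\alpha^{*}-\alpha}\bigr)$; the only change is that the identity $(\alpha^{*}-\alpha)\,N/\alpha = \alpha^{*}$ makes $a \in L^{N/\alpha}(\Omega)$ equivalent to $u \in L^{\alpha^{*}}(\Omega)$ with no room to spare.

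The hard part will be exactly this critical borderline: since there is no slack, one must invoke the \emph{sharp} embedding into $L^{\alpha^{*}}(\Omega)$ with the correct critical exponent --- $\ell^{*}$ when $\Phi \not\approx t^{m}$ and $m^{*}$ when $\Phi \approx t^{m}$ --- which is precisely why the dichotomy (i)/(ii) is assumed. Because hypotheses (i) and (ii) here coincide with those of Theorem \ref{B}, once $a$ is in hand the conclusion $u \in L^{q}(\Omega)$ for all $q \in [1,\infty)$ follows at once from that theorem.
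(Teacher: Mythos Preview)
Your proposal is correct and follows essentially the same route as the paper: absorb the extra growth of $g$ into a weight $a(x)$ depending on the fixed solution $u$, verify $a\in L^{N/\alpha}(\Omega)$ via the Sobolev embedding $W^{1,\Phi}_0(\Omega)\hookrightarrow L^{\alpha^*}(\Omega)$, and invoke Theorem~\ref{B}. The only cosmetic difference is the explicit form of the weight: the paper writes the single quotient $a(x)=\dfrac{C(|u|^{\alpha-1}+|u|^{\alpha^*-1})}{1+|u|^{\alpha-1}}$, which handles both \eqref{ellsub} and \eqref{ell} at once (since $|t|^{r-1}\le |t|^{\alpha-1}+|t|^{\alpha^*-1}$ for all $t$ when $\alpha<r<\alpha^*$), whereas you use $a(x)=C(1+|u|^{r-\alpha})$ and treat the two cases separately; either choice leads to the same borderline requirement $u\in L^{\alpha^*}(\Omega)$.
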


In order to get our main result we consider also the where $g$ is subcritical or critical. Then we can show that any weak solution for the quasilinear elliptic problem \eqref{prob} is in $L^{\infty}(\Omega)$. This result can be state as follows:

\begin{theorem}\label{thrm}
	Suppose that $(\phi_{1})-(\phi_{3})$ hold. Let $u\in W^{1,\Phi}_{0}(\Omega)$ be a weak solution for Problem~\eqref{prob}.  Assume that \eqref{ellsub} or  \eqref{ell}  holds true. Assume also that one of the following hypotheses is satisfied:
	\begin{itemize}
		\item[(i)] $\Phi\not\approx t^{m}$ and $\alpha=\ell$;		
		\item[(ii)] $\Phi\approx t^{m}$ and $\alpha=m$;
	\end{itemize}   
 Then the weak solution $u$ is in $L^{\infty}(\Omega)$.    
\end{theorem}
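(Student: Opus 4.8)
\emph{Proof proposal.} The idea is to upgrade the integrability already in hand to full boundedness by a Moser iteration carried out directly on \eqref{prob}. First I would invoke Corollary~\ref{cor1}: under the present hypotheses the weak solution $u$ belongs to $L^{q}(\Omega)$ for every $q\in[1,\infty)$. Since the growth bound (\eqref{ellsub} or \eqref{ell}) dominates $g(x,u(x))$ by powers of $|u(x)|$ with finite exponents, it follows that $f(x):=g(x,u(x))\in L^{q}(\Omega)$ for every finite $q$; in particular $f$ lies in $L^{q}(\Omega)$ with $q>N/\ell$ when $\Phi\not\approx t^{m}$, and with $q>N/m$ when $\Phi\approx t^{m}$. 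Thus $u$ is a weak solution of the homogeneous problem \eqref{probhomo} with a datum in the class handled by Theorem~\ref{A}, so morally the conclusion already follows; to make this precise without appealing to the sign of $f$ I would instead reproduce the iteration scheme intrinsically, as described next.

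Fix $\beta\ge0$ and, for $L>0$, introduce the truncations $u_{L}:=\operatorname{sgn}(u)\min\{|u|,L\}$, which make the test function $v:=u\,|u_{L}|^{2\beta}\in W^{1,\Phi}_{0}(\Omega)$ admissible. Inserting $v$ into the weak formulation of \eqref{prob} and exploiting $(\phi_{3})$ — which gives simultaneously $\phi(t)t^{2}\ge(\ell-1)\Phi(t)$ and the two-sided comparison of $\Phi(t)$ with $t^{\ell}$ and $t^{m}$ — one bounds the gradient term from below by the $\Phi$-modular (equivalently, a power of the $L^{\ell}$- or $L^{m}$-norm) of $\nabla\bigl(u\,|u_{L}|^{\beta}\bigr)$. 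Letting $L\to\infty$ by monotone convergence, then applying the Orlicz--Sobolev embedding $W^{1,\Phi}_{0}(\Omega)\hookrightarrow L^{\ell^{*}}(\Omega)$ (respectively $\hookrightarrow L^{m^{*}}(\Omega)$ when $\Phi\approx t^{m}$) and Hölder's inequality against $f\in L^{q}$, $q>N/\ell$, yields a recursive estimate of the form $\|u\|_{L^{\sigma_{k+1}}(\Omega)}\le\bigl(C\,b^{\,k}\bigr)^{1/\sigma_{k}}\|u\|_{L^{\sigma_{k}}(\Omega)}$ with dilation ratio $\chi:=\ell^{*}/\ell>1$ and $\sigma_{k}=\sigma_{0}\chi^{k}$. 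Because $u\in L^{\sigma_{0}}(\Omega)$ for every finite $\sigma_{0}$ by Corollary~\ref{cor1}, iterating and passing to the limit produces $\|u\|_{L^{\infty}(\Omega)}=\lim_{k\to\infty}\|u\|_{L^{\sigma_{k}}(\Omega)}<\infty$.

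The genuinely delicate point is the critical case \eqref{ell}, where $|u|^{\alpha^{*}-1}$ sits exactly at the threshold of the embedding and cannot be absorbed with a universal small constant. Here I would split $\Omega=\{|u|\le M\}\cup\{|u|>M\}$, estimate the contribution of $\{|u|\le M\}$ by $M^{\alpha^{*}-\alpha}$ times a subcritical quantity, and use the absolute continuity of the integral — $\int_{\{|u|>M\}}|u|^{\ell^{*}}\,\mathrm{d}x\to0$ as $M\to\infty$, valid since $u\in L^{\ell^{*}}(\Omega)$ — to force the coefficient of the remaining piece below $1/2$ and absorb it into the left-hand side. The second recurring nuisance is that $\Delta_{\Phi}$ is not homogeneous, so truncation never returns a clean gradient power and every estimate must be funnelled through the comparisons between $\Phi$, $t\mapsto\phi(t)t^{2}$, $t^{\ell}$ and $t^{m}$ coming from $(\phi_{3})$, keeping the cases $\Phi\not\approx t^{m}$ and $\Phi\approx t^{m}$ separate throughout. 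Checking that the resulting constants still give a convergent product $\prod_{k\ge0}\bigl(C\,b^{\,k}\bigr)^{1/\sigma_{k}}$ is the main bookkeeping burden, but it mirrors the computation already performed for Theorem~\ref{A} and for the first part of this theorem via Corollary~\ref{cor1}.
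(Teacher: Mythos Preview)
Your first two paragraphs are correct and closely match the paper's approach: both start from Corollary~\ref{cor1} to obtain $u\in L^{q}(\Omega)$ for every finite $q$, then run a Moser iteration with truncated power test functions. Your route---treating $f(x):=g(x,u(x))$ as a datum in $L^{q}$ with $q>N/\alpha$ and iterating \`a la Theorem~\ref{A}---is a clean reorganisation of what the paper actually does; the paper instead keeps the nonlinearity explicit and, at the step where $\int|\psi|^{\ell}\,|u|^{\ell^{*}-\ell}$ appears on the right, applies H\"older so that one factor is a fixed power of $\|u\|_{(\ell^{*})^{2}/\ell}$, now a \emph{finite constant} thanks to Corollary~\ref{cor1}, and then iterates $s+1\mapsto r(s+1)$ with a specific $r>1$. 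The two implementations are equivalent in spirit.

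Your third paragraph, however, is misplaced. Once Corollary~\ref{cor1} is in hand the critical exponent is \emph{no longer} at threshold: $|u|^{\ell^{*}-\ell}$ lies in every $L^{p}$ and can be H\"oldered out with a constant independent of the iteration step, exactly as your first paragraph already recognises. The absorption/splitting device you describe---choose $M$ so that $\int_{\{|u|>M\}}|u|^{\ell^{*}}$ is small enough to absorb---is precisely the mechanism used inside the proof of Theorem~\ref{B} to \emph{reach} Corollary~\ref{cor1} in the first place; invoking it again here is redundant, and taken on its own it would not deliver $L^{\infty}$, because the cut-off $M$ must grow with the iteration parameter $s$ (to beat the factor $(1+s)^{\ell}$), and the resulting constants $M(s)^{\ell^{*}-\ell}$ in the recursion need not form a convergent product. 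In short: trust your first paragraph, drop the third, and the argument is complete.
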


As an application we can ensure that any nonnegative solution for the quasilinear elliptic problem \eqref{prob} is strictly positive. In other words, we can state the following Strong Maximum Principle for quasilinear elliptic equations driven by the $\Phi$-Laplacian operator as follows:
\begin{theorem}[Strong Maximum Principle]\label{smp}
	Suppose that $(\phi_{1})-(\phi_{3})$ hold. Let $u\in W^{1,\Phi}_{0}(\Omega)$ be a nonnegative weak solution of \eqref{prob}, where $g:\Omega\times\mathbb{R}\rightarrow\mathbb{R}$ satisfies \eqref{ellsub} or  \eqref{ell} with $\ell>1$. Moreover, suppose that there exists $\delta>0$ such that $g(x,s)>0$ for all $x\in\Omega$ and $s\in(0,\delta)$. Then we obtain that $u\in C^{1,\alpha}(\overline{\Omega})$ and $u>0$ in $\Omega$.
\end{theorem}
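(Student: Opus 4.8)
The plan is to chain together three ingredients: the boundedness already obtained in Theorem~\ref{thrm}, the $C^{1,\alpha}$ regularity theory for the $\Phi$-Laplacian with bounded data, and a strong maximum principle (equivalently, a weak Harnack inequality) for nonnegative supersolutions of $-\Delta_{\Phi}$.

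First I would apply Theorem~\ref{thrm}: since $g$ satisfies \eqref{ellsub} or \eqref{ell} and $\ell>1$, any weak solution $u\in W^{1,\Phi}_{0}(\Omega)$ of \eqref{prob} lies in $L^{\infty}(\Omega)$. Hence, setting $h(x):=g(x,u(x))$ and using the growth of $g$ together with $\|u\|_{L^{\infty}}<\infty$, we get $h\in L^{\infty}(\Omega)$, so that $u$ solves $-\Delta_{\Phi}u=h$ in $\Omega$, $u=0$ on $\partial\Omega$, with bounded right-hand side. As $\partial\Omega$ is smooth and $\phi$ satisfies $(\phi_{1})-(\phi_{3})$, the interior and boundary $C^{1,\alpha}$-estimates of Lieberman \cite{lieberman} (which specialize to the DiBenedetto--Tolksdorf estimates in the $p$-Laplacian model case) apply and yield $u\in C^{1,\alpha}(\overline{\Omega})$ for some $\alpha\in(0,1)$. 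This settles the first assertion.

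To prove $u>0$ in $\Omega$ we may assume $u\not\equiv 0$. Put $Z:=\{x\in\Omega:\ u(x)=0\}$, which is relatively closed in $\Omega$ by continuity of $u$; I claim it is also open. Fix $x_{0}\in Z$ and, by continuity, choose $r>0$ with $B:=B_{2r}(x_{0})\subset\Omega$ and $0\le u<\delta$ on $B$. Because $g(x,s)>0$ for every $s\in(0,\delta)$ and, by the Carath\'eodory property, $g(x,0)=\lim_{s\to 0^{+}}g(x,s)\ge 0$ for a.e.\ $x$, we obtain $h=g(\cdot,u)\ge 0$ a.e.\ in $B$; thus $u$ is a nonnegative weak supersolution of $-\Delta_{\Phi}u=0$ in $B$. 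Invoking the weak Harnack inequality for nonnegative supersolutions of the $\Phi$-Laplacian, valid under $(\phi_{1})-(\phi_{3})$ with $\ell>1$ (see \cite{lieberman} and the Moser/De Giorgi--Nash iteration in Orlicz--Sobolev spaces), there exist $q_{0}>0$ and $C>0$ with
\[
\left(\frac{1}{|B|}\int_{B}u^{q_{0}}\,\mathrm{d}x\right)^{1/q_{0}}\le C\inf_{B_{r}(x_{0})}u\le C\,u(x_{0})=0,
\]
hence $u\equiv 0$ on $B$ and $B\subset Z$. So $Z$ is open, closed and nonempty in the connected domain $\Omega$, which forces $Z=\Omega$, i.e.\ $u\equiv 0$, a contradiction. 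Therefore $u>0$ in $\Omega$.

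The main obstacle is not the logical skeleton but the careful invocation of the two external results in the non-homogeneous Orlicz--Sobolev framework under exactly $(\phi_{1})-(\phi_{3})$: one uses $\ell>1$ to guarantee that the operator is sufficiently non-degenerate for both the global $C^{1,\alpha}$ estimate and the Harnack inequality to hold, and one must check that $h\in L^{\infty}$ (respectively $L^{q}$ with $q$ large) feeds correctly into those estimates. A minor point worth flagging is that the positivity conclusion is understood for a nontrivial solution $u$.
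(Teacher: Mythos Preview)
Your reduction to $u\in L^{\infty}(\Omega)$ via Theorem~\ref{thrm} and then to $u\in C^{1,\alpha}(\overline{\Omega})$ via Lieberman's estimates is exactly what the paper does. The divergence is in the positivity argument. The paper applies the Pucci--Serrin strong maximum principle \cite[Theorem 1.1]{pucci} directly: it introduces $H(t):=t^{2}\phi(t)-\Phi(t)$, checks that $H$ is increasing with $\Phi^{-1}(s)\ge H^{-1}((\ell-1)s)$, and then uses the growth condition on $g$ to verify the Keller--Osserman type divergence $\int_{0}^{\delta}\frac{\mathrm{d}s}{H^{-1}(G(x,s))}=\infty$, which is precisely the structural hypothesis Pucci--Serrin require. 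Your route instead localizes near a hypothetical zero, observes that $-\Delta_{\Phi}u\ge 0$ there because $g(x,u)\ge 0$ when $0\le u<\delta$ (and indeed $g(x,0)=0$ follows already from \eqref{ellsub} or \eqref{ell} with $\alpha>1$), and runs the weak Harnack inequality plus a connectedness argument.

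Both arguments are sound. The paper's approach is more self-contained with respect to the references it already carries (Pucci--Serrin is cited and the integral condition is checked explicitly), and it yields slightly more, since the Pucci--Serrin machinery also delivers Hopf-type information at $\partial\Omega$. Your approach is conceptually cleaner and avoids computing $H$ and $G$, but it hinges on a weak Harnack inequality for nonnegative supersolutions of the $\Phi$-Laplacian under $(\phi_{1})$--$(\phi_{3})$; this is available in the literature, though the proceedings note \cite{lieberman} cited here may not state it in the form you need, so you would want to point to the full Lieberman paper or to a Trudinger-type result in the Orlicz setting. Your closing remark that nontriviality of $u$ must be assumed is well taken and applies equally to the paper's proof.
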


\begin{remark}
	We mention that our results remain true for more general quasilinear elliptic problems. For instance, we can consider the following class of problems 
	  \begin{equation*}
	 \left\{\
	 \begin{array}{cl}
	  -\dive{A}(x,u,\nabla u)= g(x,u, \nabla u), & \mbox{in}~\Omega,\\
	 u= 0, & \mbox{on}~\partial \Omega,
	 \end{array}
	 \right. 
	 \end{equation*}
	where $A:\Omega\times\mathbb{R}\times\mathbb{R}^{N}\rightarrow\mathbb{R}^{N}$ is a Carath\'eodory function satisfying the following assumptions:
	 \begin{itemize}
	 	\item[(i)] there exist constants $a_{1}>0$ and $a_{2},a_{3}\geq0$ satisfying
	 	\begin{equation*}
	 	\langle A(x,z,\xi),\xi\rangle \geq a_{1}\Phi(|\xi|)-a_{2}|z|^{\ell}-a_{3}, \quad x \in \Omega, z \in \mathbb{R}, \xi \in \mathbb{R}^{N};
	 	\end{equation*}
	 	\item[(ii)] There exists $a_{4} > 0$ such that $$|A(x,z,\xi)|\leq a_{4}\phi(|\xi|)|\xi|, \quad x \in \Omega, z \in \mathbb{R}, \xi \in \mathbb{R}^{N}.$$
	 \end{itemize}
    Here we also assume that 
    \begin{equation*}
    |g(x,t, z)|\leq C(|t|^{\alpha-1}+|t|^{r-1}), \quad \mbox{for all} \hspace{0,2cm} (x,t,z)\in \Omega\times\mathbb{R} \times \mathbb{R}^{N}
    \end{equation*}
    where $\alpha < r \leq \alpha^*$. In particular, assuming that conditions $(i)$ and $(ii)$ are satisfied with $\Phi(t)=t^{\ell}/\ell$, then we obtain the operator considered by P. Pucci and R. Servadei \cite{pucci2}. Moreover, our results complement \cite[Theorem 2]{cianchi}, since we have obtained regularity for weak solutions which are not necessarily local minimum of the associated energy functional.
\end{remark}

\begin{remark}\label{rem}
	We point out that Theorems~\ref{A}, \ref{B} and \ref{thrm} hold locally for any domain $\Omega\subset\mathbb{R}^{N}$. More precisely, let $f,a\in L^{q}_{loc}(\Omega)$ with $q\geq N/\alpha$ where $\alpha=\ell$ if $\Phi\not\approx t^{m}$ and $\alpha=m$ if $\Phi\approx t^{m}$. Assume also that $u\in W^{1,\Phi}_{loc}(\Omega)$ is a weak solution for the Problem \eqref{prob}. Then, we have the following conclusions:
	\begin{itemize}
		\item[(i)] If $q>N/\alpha$, then $u\in L^{\infty}_{loc}(\Omega)$. This fact follows by a slight adaptation of the proof of Theorem~\ref{A}, by considering the test function $\varphi:=|u|^{\beta_{k}/q'}\eta^{m}$, see \eqref{ml2}.
		\item[(ii)] If $q=N/\alpha$, then $u\in L^{q}_{loc}(\Omega)$, for all $q\in [1,+\infty)$. This fact follows by a slight adaptation of the proof of Theorem~\ref{B}, by considering the test function $\varphi:=u\min\{|u|^{\alpha s},L^{\alpha} \}\eta^{m}$, where $L$ is a positive parameter.
	\end{itemize}
	In the preceding items, $\eta\in C^{\infty}_{0}(\Omega)$ such that $0\leq\eta(x)\leq 1$, for all $x\in\Omega$.
\end{remark}

\begin{remark}
	Let $H(t):=\int_{0}^{t}h(s)\,\mathrm{d}s$ be a $N$-function which satisfies
	 \[
	  m<h^{-}:=\inf_{t>0}\frac{h(t)t}{H(t)}\leq \frac{h(t)t}{H(t)}\leq h^{+}:=\sup_{t>0}\frac{h(t)t}{H(t)}<\ell^{*}, \quad \mbox{for all} \hspace{0,2cm} t>0.
	 \]
	Moreover, following same ideas discussed in \cite{fang}, we also suppose that
	 \begin{equation}\label{tan}
	 	|g(x,t)|\leq C_{1}+C_{2}h(t), \quad  \mbox{for all} \hspace{0,2cm} (x,t)\in\Omega\times \mathbb{R}.
	 \end{equation}
	Notice that, in view of Lemma \ref{lema_naru}, one has
	 \[
	  \lim_{t\rightarrow \infty} \frac{h(t)}{t^{\ell^{*}-1}}\leq h(1)\lim_{t\rightarrow \infty}\frac{t^{h^{+}-1}}{t^{\ell^{*}-1}}=0.
	 \]
	For any $\varepsilon>0$ there exists $k>0$ such that if $|t|>k$, then $h(t)<\varepsilon t^{\ell^{*}-1}$. Consequently, there exists $C>0$ such that
	 \[
	  |g(x,t)|\leq C(1+|t|^{\ell^{*}-1}), \quad \mbox{for all} \hspace{0,2cm} (x,t)\in\Omega\times\mathbb{R}.
	 \]
	Thus, assuming that \eqref{tan} holds then we are able to apply Theorem~\ref{thrm} to conclude that any weak solution of Problem~\eqref{prob} belongs to $L^{\infty}(\Omega)$. Therefore, our main results complements \cite[Theorem 3.1]{fang} since we also consider the critical case and we required that only $\Phi$ satisfies the so called $\Delta_{2}$-condition. We mention that in light of \cite[Theorem 1.7]{lieberman} it follows that $u\in C^{1,\alpha}(\overline{\Omega})$, for some $\alpha\in(0,1)$.      
\end{remark}

\begin{remark}
	Notice that for our main results given in Theorems~\ref{B}, \ref{thrm} and Corollary \ref{cor1}, the Orlicz-Sobolev space $W^{1,\Phi}_{0}(\Omega)$ may not be a reflexive space, since we are also considering the extremal case $\ell=1$. More precisely, for the non-reflexive case, the conjugate function $\tilde{\Phi}$ does not satisfy $\Delta_{2}$-condition, see \cite{adams}.
\end{remark}

Notice that our main results complement some classical results by showing that any weak solution to the elliptic Problem \eqref{prob} is bounded. As was mentioned before, for quasilinear operators such as the $p$-Laplacian operator there exists several results concerning on regularity. On this subject we refer the reader to the important works \cite{dibene,peral,lad,pucci}. For this operator, choosing $\Phi(t) = |t|^{p}/p$ with $p > 1$, recall that Problem \eqref{probhomo} admits a bounded weak solution if and only if the nonlinearity $f$ is in $L^{q}(\Omega)$ for some $q > N/p$, see \cite{peral}. Furthermore, also for the $p$-Laplacian operator we know that any weak solution to the quasilinear elliptic problem \eqref{probhomo} is in $L^{q}(\Omega)$ for all $q \in [1, \infty)$ whenever $f$ is in $L^{N/p}(\Omega)$. Here we refer the reader to the important works \cite{pucci,pucci2}. For the $\Phi$-Laplacian operator there exists some preliminary results on regularity, see \cite{cianchi,fang}. However, to the best of our knowledge, there are not results on regularity taking into account the $\Phi$-Laplacian showing that weak solutions are bounded where the nonlinear term is critical. It is important to emphasize also that Theorem~\ref{B} jointly with Remark~\ref{rem} extend and complement \cite[Theorem 2.1]{pucci2}. Furthermore, in \cite{fang} the authors considered a more general class of nonlinearities with subcritical growth. In view of Remarks 1.8--1.11, the results obtained in \cite{fang} are extended in the present work since we deal with a more general operator together with subcritical and critical nonlinear term $g$ by showing that any weak solution to the elliptic problem \eqref{prob} remains bounded. 

The paper is organized as follows: Section \ref{s4} is devoted to the homogeneous case given in \eqref{probhomo} getting a proof for Theorem \ref{A}. In Section \ref{s3} we give some regularity results for the problem \eqref{prob} which provide us the proof of Theorem \ref{B}, Theorem \ref{thrm} and Corollary \ref{cor1},  Theorem \ref{smp}. In the Appendix we give an overview on Orlicz and Orlicz-Sobolev framework. Henceforth, we write $\int_{\Omega}f$ instead $\int_{\Omega}f(x)\;\mathrm{d}x$.


\section{The homogeneous case}\label{s4}

In order to obtain existence of solutions for \eqref{probhomo}, we introduce the following auxiliary problem
	\begin{equation}\label{aux}
	\left\{\
	\begin{array}{cl}
	\displaystyle-\Delta_\Phi u= f_{n}(x), & \mbox{in}~\Omega\\
	u=0, & \mbox{on}~\partial \Omega,
	\end{array}
	\right.
	\end{equation}
	where $f_{n}(x):=\min\{f(x),n\}$, $n\in\mathbb{N}$. The main idea is to get a sequence $\{u_{n}\}_{n} \subset W^{1,\Phi}_{0}(\Omega)$ which converges to a weak solution for the quasilinear elliptic problem \eqref{probhomo}. Moreover, such sequence has to be sufficiently regular in order to use $\varphi=u_{n}^{\beta_{k}/q'}$ as test function in \eqref{aux}, where $\beta_{k}$ will be defined later. In view of \cite[Lemma 3.1]{Fuk_2}, \cite[Theorem 1.1]{FG} and \cite[Corollary 3.1]{fang}, we conclude that for each $n\in\mathbb{N}$, Problem \eqref{aux} possesses an unique solution $u_{n}$ which belongs to $C^{1,\alpha_n}(\overline\Omega)$, for some $0<\alpha_n<1$. In light of the Comparison Principle \cite[Lemma 4.1]{fang}, the sequence of solutions $\{u_{n}\}_{n}$ is increasing, that is, using the fact that $f(x) \geq 0$  in $\Omega$ and $f \neq 0$, we obtain that 
	\[
	0<u_1\leq u_2\leq...\leq u_{n}\leq u_{n+1}\leq...
	\]
	Throughout this work we define $u^{-}(x) = - \max(u,0)$ for any $u \in \w$.
	
	From now on, for any $n\in\mathbb{N}$, we infer that the solution $u_{n}$ is positive. In fact, by using the negative part $-u_{n}^{-}$ as test function in \eqref{aux}, we can deduce that
	\[
	\ell\int_\Omega \Phi(|\nabla u_n^-|)\leq\int_\Omega \phi(|\nabla u^-_n|)|\nabla u_n^-|^2=-\int_\Omega f_nu_n^-\leq 0.
	\]
	Thus, $u_n^-\equiv 0$, that is, $u_n\geq 0$. Therefore, by using Strong Maximum Principle \cite[Theorem 1.1]{pucci} we conclude that $u_n>0$. Now we shall divide the proof of the existence result into three steps.
	
	\vspace{0,4cm}
	
	\noindent \textit{Step 1.} $\{u_{n}\}_{n}$ is a bounded sequence in $\w$.
	
	\vspace{0,4cm}
	
	In fact, since $q>N/\ell$ one has $q'<\ell^*$ ($q'<m^{*}$, if $q>N/m$) we have the continuous embedding $\w\hookrightarrow L^{q'}(\Omega)$. By using $u_n$ as test function in \eqref{aux} we obtain
	\begin{equation*}
	\ell\min\{\|u_n\|^\ell,\|u_n\|^m\}\leq\ell\int_\Omega\Phi(|\nabla u_n|)\leq\int_\Omega \phi(|\nabla u_n|)|\nabla u_n|^2
	\leq C\|f\|_q\|u_n\|,
	\end{equation*}
	which implies that $\{u_n\}_{n}$ is bounded in $\w$. As a consequence, we know that $u_n\rightharpoonup u$ weakly in $\w$.
	
	\vspace{0,4cm}
	
	\noindent \textit{Step 2.} $u_n\rightarrow u$ strongly in $\w$.
	
	\vspace{0,4cm}
	
   By taking $u_n-u$ as test function in \eqref{aux} and using the compact embedding $\w\hookrightarrow L^{q'}(\Omega)$ we obtain
	\begin{equation*}
	\langle -\Delta_\Phi u_n,u_n-u\rangle = \int_\Omega f_n(u_n-u)
	\leq \int_\Omega f|u_n-u|
	\leq  \|f\|_q\|u_n-u\|_{q'}\rightarrow 0.
	\end{equation*}
	Therefore, in view of condition $(S_+)$ (see \cite[Theorem~4]{le}) we conclude that $u_n\rightarrow u$ strongly in $\w$.
	
	\vspace{0,4cm}
	
	\noindent \textit{Step 3.} The function $u$ described above is a weak solution for the homogeneous quasilinear elliptic problem ~\eqref{probhomo}.
	
	\vspace{0,4cm}
	
	According to Step 2 we observe that $\nabla u_n\rightarrow \nabla u$ a.e. in $\Omega$, see \cite{murat}. It follows that
	\[
	\phi(|\nabla u_n|)\nabla u_n\rightarrow \phi(|\nabla u|)\nabla u, \quad \mbox{a.e. in } \Omega.
	\]
	Since $\{\phi(|\nabla u_n|)\nabla u_n\}_{n}$ is bounded in $\prod L_{\widetilde{\Phi}}(\Omega)$, it follows from \cite[Lemma 2]{gossez-Czech} that
	\[
	\int_\Omega\phi(|\nabla u_n|)\nabla u_n\nabla v \rightarrow \int_\Omega\phi(|\nabla u|)\nabla u\nabla v , \quad \mbox{for all} \hspace{0,2cm} v\in\w.
	\]
	On the other hand, since $f_n\leq f$ and $f_nv\rightarrow fv$ a.e. in $\Omega$, by using Lebesgue Dominated Convergence Theorem we conclude that
	\[
	\int_{\Omega} f_n v \rightarrow \int_{\Omega} fv, \quad \mbox{for all} \hspace{0,2cm} v\in \w.
	\]
	The last identity implies that $u$ is a weak solution for the quasilinear elliptic problem \eqref{probhomo}. Now, we are concerned with the regularity for the Problem \eqref{probhomo}.

\begin{proof}[{\bf Proof of Theorem~\ref{A}~(i)}] The main idea is to apply a Moser iteration method. Let us introduce the following sequence
 \[
  \beta_1=q'(\ell-1), \quad \beta_k^*=\beta_k+\beta_1 \quad \mbox{and} \quad \beta_{k+1}=\delta\beta_k^*,
 \]
where $\delta:=\ell^*/(\ell q')$. Note that since $q>N/\ell$ which implies that $\delta>1$. Thus, we can deduce that
 \begin{equation}\label{ml1}
  \beta_k^*=\left(2\delta^{k-1}+\delta^{k-2}+...+1\right)\beta_1=\frac{2\delta^k-\delta^{k-1}-1}{\delta-1}\beta_1,
 \end{equation}
 \begin{equation}\label{ml2}
  \beta_k=\left(2\delta^{k-1}+\delta^{k-2}+...+\delta\right)\beta_1=\frac{2\delta^k-\delta^{k-1}-\delta}{\delta-1}\beta_1.
 \end{equation}
Since $\{\beta_{k}\}_{k}$ is a increasing sequence and $\beta_k\rightarrow+\infty$, as $k\rightarrow+\infty$, let us consider $k_0\in\mathbb{N}$ be such that $\beta_{k_0}-q'\geq q'$, for all $k\geq k_{0}$. By taking $\varphi=u_n^{\frac{\beta_k}{q'}}$ as test function in \eqref{aux} and using H\"{o}lder inequality we get
 \begin{equation}\label{equ1}
  \frac{\beta_k}{q'}\int_\Omega\phi(|\nabla u_n|)|\nabla u_n|^2u_n^{\frac{\beta_k}{q'}-1}\leq \int_{\Omega} f_nu_n^{\frac{\beta_k}{q'}}\leq \|f\|_q\|u_n\|_{\beta_k}^{\frac{\beta_k}{q'}}.
 \end{equation}	
On the other hand, in view of Proposition \ref{lema_naru}, one has
 \begin{equation}\label{eq_2}
  \frac{\beta_k}{q'}\int_\Omega\phi(|\nabla u_n|)|\nabla u_n|^2u_n^{\frac{\beta_k}{q'}-1} \geq \frac{\ell\Phi(1)}{q'}\beta_k\int_{\{|\nabla u_n|\geq 1\}}|\nabla u_n|^\ell u_n^{\frac{\beta_k}{q'}-1}.
 \end{equation}
Combining \eqref{equ1} and \eqref{eq_2} we obtain
 \begin{equation}\label{equ3}
  \frac{\ell\Phi(1)}{q'}\beta_k\int_{\Omega}|\nabla u_n|^\ell u_n^{\frac{\beta_k}{q'}-1}\leq \|f\|_q\|u_n\|_{\beta_k}^{\frac{\beta_k}{q'}}+\frac{\ell\Phi(1)}{q'}\beta_k\int_{\Omega} u_n^{\frac{\beta_k}{q'}-1}.
 \end{equation}
By using the embedding $L^{{\beta_k}}(\Omega)\hookrightarrow L^{\frac{\beta_k}{q'}-1}(\Omega)$ we also get
 \begin{eqnarray}\label{eq-u_1}
  \int_\Omega u_n^{\frac{\beta_k}{q'}-1}\leq |\Omega|^{1-\frac{1}{q'}+\frac{1}{\beta_k}}\|u_n\|_{\beta_k}^{\frac{\beta_k}{q'}}\|u_n\|_{\beta_k}^{-1}.
 \end{eqnarray}
In view of the embedding $L^{{\beta_k}}(\Omega)\hookrightarrow L^1(\Omega)$ we infer that
 \begin{equation*}
  \|u_1\|_1\leq |\Omega|^{1-\frac{1}{\beta_k}}\|u_1\|_{\beta_k}.
 \end{equation*}
Since $u_1\leq u_n$, it follows that $\|u_1\|_{\beta_k}\leq \|u_n\|_{\beta_k}$. It is no hard to verify that 
 \begin{equation}\label{eq-u_1-4}
  \|u_n\|_{\beta_k}^{-1}\leq |\Omega|^{1-\frac{1}{\beta_k}}\|u_1\|_1^{-1}.
 \end{equation}
Thus, combining \eqref{eq-u_1} and \eqref{eq-u_1-4} we conclude that
 \begin{eqnarray}\label{eq-u_1-5}
  \int_\Omega u_n^{\frac{\beta_k}{q'}-1}\leq |\Omega|^{2-\frac{1}{q'}}\|u_1\|_{1}^{-1}\|u_n\|_{\beta_k}^{\frac{\beta_k}{q'}}.
 \end{eqnarray}
By using \eqref{equ3} and \eqref{eq-u_1-5} we obtain
 \begin{eqnarray}
  \frac{\ell\Phi(1)}{q'}\beta_k\int_{\Omega}|\nabla u_n|^\ell u_n^{\frac{\beta_k}{q'}-1} &\leq& \left(\|f\|_q+\frac{\ell\Phi(1)}{q'}\beta_k|\Omega|^{2-\frac{1}{q'}}\|u_1\|_{1}^{-1}\right)\|u_n\|_{\beta_k}^{\frac{\beta_k}{q'}}\nonumber\\
  &\leq& \beta_k\left(\|f\|_q+\frac{\ell\Phi(1)}{q'}|\Omega|^{2-\frac{1}{q'}}\|u_1\|_{1}^{-1}\right)\|u_n\|_{\beta_k}^{\frac{\beta_k}{q'}}\nonumber.
 \end{eqnarray}
Thus, we have concluded that
 \begin{equation}\label{ult-1}
  \int_{\Omega}|\nabla u_n|^\ell u_n^{\frac{\beta_k}{q'}-1} \leq A\|u_n\|_{\beta_k}^{\frac{\beta_k}{q'}},
 \end{equation}
where
 \[
  A:=\frac{q'}{\ell\Phi(1)}\left(\|f\|_q+\frac{\ell\Phi(1)}{q'}|\Omega|^{2-\frac{1}{q'}}\|u_1\|_{1}^{-1}\right).
 \]
Notice that
 \begin{eqnarray}\label{eq_6}
  \displaystyle\left(\frac{\ell q'}{\beta_k+\beta_1}\right)^\ell\int_\Omega\left|\nabla \left(u_n^{\frac{\beta_k+\beta_1}{\ell q'}}\right)\right|^\ell=\int_\Omega|\nabla u_n|^\ell u_n^{\frac{\beta_k}{q'}-1}.
 \end{eqnarray}
Combining \eqref{ult-1} and \eqref{eq_6} we deduce that
 \begin{equation*}
  \int_\Omega\left|\nabla \left(u^{\frac{\beta_k^*}{\ell q'}}\right)\right|^\ell  \leq A\left(\frac{\beta_k^*}{\ell q'}\right)^\ell\|u_n\|_{\beta_k}^{\frac{\beta_k}{q'}}.
 \end{equation*}
In view of the embedding $W_0^{1,\ell}(\Omega)\hookrightarrow L^{\ell^*}(\Omega)$, there exists $\mu>0$ such that
 \begin{equation}\label{eq_8}
  \|u_n\|_{\beta_{k+1}}^{\frac{\beta_k^*}{q'}}=\left\|u^{\frac{\beta_k^*}{\ell q'}}\right\|_{\ell^*}^\ell  \leq \mu^\ell A\left(\frac{\beta_k^*}{\ell q'}\right)^\ell\|u_n\|_{\beta_k}^{\frac{\beta_k}{q'}}.
 \end{equation}
Let us define $F_{k+1}:=\beta_{k+1}\ln \|u_n\|_{\beta_{k+1}}$. It follows from \eqref{eq_8} that
 \begin{eqnarray}
 F_{k+1} &\leq & \frac{\beta_{k+1}q'}{\beta_k^*}\left(\ell\ln \mu +\ell\ln\left(\frac{\beta_k^*}{\ell q'}\right)+\ln A+\frac{\beta_k}{q'}\ln\|u_n\|_{\beta_k}\right)\nonumber\\
         &\leq& \ell^*\ln\left(\mu A\beta_k^*\right)+ \frac{\ell^*}{q'\ell}F_k\nonumber\\
         &=&\lambda_k+\delta F_k\nonumber,
 \end{eqnarray}
where $\lambda_k:=\ell^*\ln\left(\mu A\beta_k^*\right)$. By using \eqref{ml1} and \eqref{ml2} we deduce that
 \[
  \lambda_k=b+\ell^*\ln\left(2\delta^{k-1}+\delta^{k-2}+...+1\right),
 \]
where $b:=\ell^*\ln(\mu A\beta_1)$. Hence, we get
 \begin{equation}\label{eq_10}
  \frac{\lambda_n}{\delta^n} = \frac{b}{\delta^n}+\frac{\ell^*}{\delta^n}\ln\left(\frac{2\delta^n-\delta^{n-1}-1}{\delta-1}\right)
  \leq \frac{b}{\delta^n}+\frac{\ell^*}{\delta^n}\ln\left(\frac{2\delta^n}{\delta-1}\right).
\end{equation}
Furthermore, we also mention that 
 \[
  F_k\leq \delta^{k-1}F_1+\lambda_{k-1}+\delta\lambda_{k-2}+...+\delta^{k-2}\lambda_1.
 \]
This inequality shows that 
 \begin{eqnarray}\label{eq_9}
  \frac{F_k}{\beta_k} & \leq & \frac{\displaystyle F_1+\frac{\lambda_{k-1}}{\delta^{k-1}}+\frac{\lambda_{k-2}}{\delta^{k-2}}+...+\frac{\lambda_1}{\delta}}{\displaystyle\frac{2\delta-1-\displaystyle\frac{1}{\delta^{k-1}}}{\delta-1}\beta_1}.
 \end{eqnarray}
Combining \eqref{eq_9} and \eqref{eq_10} we obtain
 \begin{eqnarray}
  \frac{F_k}{\beta_k} & \leq & \frac{F_1+b\left(\displaystyle\frac{1}{\delta^{k-1}}+...\frac{1}{\delta}\right)+\ell^*\left(\displaystyle\frac{1}{\delta^{k-1}}\ln\left(\frac{2\delta^{k-1}}{\delta-1}\right)+...+
	\frac{1}{\delta}\ln\left(\frac{2\delta}{\delta-1}\right)\right)}{\displaystyle\frac{2\delta-1-1/\delta^{k-1}}{\delta-1}\beta_1}\nonumber\\
  & \leq & \frac{F_1+\displaystyle\frac{b}{\delta-1}+\ell^*\left[\left(\frac{1}{\delta^{k-1}}+...+\frac{1}{\delta}\right)\ln\left(\frac{2}{\delta-1}\right)+\left(\frac{k-1}{\delta^{k-1}}+...+\frac{1}{\delta}\right)\ln(\delta)\right]}{\frac{\displaystyle 2\delta-1-1/\delta^{k-1}}{\displaystyle\delta-1}\beta_1}\nonumber\\
  & \leq & \frac{F_1+\displaystyle\frac{b}{\delta-1}+\ell^*\left[\frac{1}{\delta-1}\ln\left(\frac{2}{\delta-1}\right)+\ln(\delta)\sum_{n=1}^\infty \frac{n}{\delta^n}\right]}{\displaystyle\frac{2\delta-1-1/\delta^{k-1}}{\delta-1}\beta_1}\longrightarrow d_0\nonumber.
 \end{eqnarray}
As a consequence, using  the definition of $F_k$ we also conclude that
 \[
  \|u_n\|_\infty=\limsup_{k\rightarrow\infty}\|u_n\|_{\beta_k}\leq \limsup_{k\rightarrow\infty} e^{\frac{F_k}{\beta_k}}\leq e^{d_0}.
 \]
At this stage, we also mention that
 \[
  \|u\|_\infty\leq \liminf_{n\rightarrow\infty}\|u_n\|_\infty\leq e^{d_0},
 \]
which implies that $u\in L^{\infty}(\Omega)$. This ends the proof. 
\end{proof}

\begin{proof}[{\bf Proof of Theorem~\ref{A}~(ii)}]
The proof of Theorem~\ref{A}~$(ii)$ follows by similar arguments from the proof of Theorem~\ref{A}~$(i)$. Let $\{u_{n}\}_{n}$ be the sequence of solutions for \eqref{aux}. Under this condition, using the fact that $\Phi\approx t^{m}$, there exist $C,T>0$ such that
 \[
   Ct^{m}\leq\Phi(t), \quad \mbox{for all} \hspace{0,2cm} t\geq T.
 \]
Moreover, since $q'<m^{*}$ it follows that
 \[
  W^{1,\Phi}_{0}(\Omega)=W^{1,m}_{0}(\Omega)\hookrightarrow L^{m^{*}}(\Omega)\hookrightarrow L^{q'}(\Omega).
 \]
Arguing as in \textit{Step} 1 we infer that $\{u_{n}\}_{n}$ is a bounded sequence in $W^{1,\Phi}_{0}(\Omega)$. Following the same ideas discussed in the proof of Theorem~\ref{A}~$(i)$, we define $\beta_{1}=q'(m-1)$ and $\delta=m^{*}/(mq')$. Now, we also change \eqref{eq_2} by
 \[
    \frac{\beta_k}{q'}\int_\Omega\phi(|\nabla u_n|)|\nabla u_n|^2u_n^{\frac{\beta_k}{q'}-1} \geq \frac{\ell\Phi(1)}{q'}\beta_k\int_{\{|\nabla u_n|\geq T\}}|\nabla u_n|^m u_n^{\frac{\beta_k}{q'}-1}.
 \]
Moreover, the estimate \eqref{equ3} can be rewritten in the following form
 \[
  \frac{\ell\Phi(1)}{q'}\beta_k\int_{\Omega}|\nabla u_n|^m u_n^{\frac{\beta_k}{q'}-1}\leq \|f\|_q\|u_n\|_{\beta_k}^{\frac{\beta_k}{q'}}+\frac{\ell T^{m}\Phi(1)}{q'}\beta_k\int_{\Omega} u_n^{\frac{\beta_k}{q'}-1}.
 \]
In order to deduce \eqref{eq_8}, we use the embedding $W^{1,\Phi}_{0}(\Omega)\hookrightarrow L^{m^{*}}(\Omega)$. Henceforth, the proof follows analogously to the proof of Theorem~\ref{A}~$(i)$. We omit the details. 
\end{proof}

\section{The nonhomogeneous case}\label{s3}

In this Section we consider the nonhomogeneous problem given by \eqref{prob}. In order to obtain regularity, we shall use a Moser's iteration method, see \cite{pucci2,struwe}.
Before starting the procedure, we consider a useful estimate which will be crucial in the method.

\begin{lemma}\label{est}
	Let $u\in W^{1,\Phi}_{0}(\Omega)$ be a weak solution of \eqref{prob} and $s,L$ positive parameters. Then, it holds
	 \begin{equation}\label{emj1}
	  |\nabla (u\min\{|u|^{s},L \})|^{\ell}\leq c_{\ell}\left\{|\nabla u|^{\ell}\min\{|u|^{\ell s},L^{\ell} \}+(2s+s^{2})^{\ell/2}|\nabla u|^{\ell}|u|^{\ell s}\mathcal{X}_{\{|u|^{s}\leq s \}} \right\},
	 \end{equation}
	where $\mathcal{X}_{\{|u|^{s}\leq L\}}$ denotes the characteristic function over the set $\{|u|^{s}\leq L\}$ and
	 \[
	  c_{\ell}=\left\{
	   \begin{array}{cl}
	    1, & \ell\leq 2,\\
	    2^{\ell/2-1}, & \ell >2.
	   \end{array}
	  \right.
	 \]
\end{lemma}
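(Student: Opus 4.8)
The plan is to compute the gradient of $v:=u\min\{|u|^{s},L\}$ explicitly on the two natural regions and then to reduce \eqref{emj1} to an elementary convexity inequality. First I would fix $L>0$ and observe that the scalar function $\psi_L(t):=t\min\{|t|^{s},L\}$ is odd, continuous and globally Lipschitz: on $\{|t|^{s}\le L\}$ it equals $t|t|^{s}$ with $\psi_L'(t)=(1+s)|t|^{s}\le(1+s)L$, while on $\{|t|^{s}>L\}$ it equals $Lt$ with $\psi_L'(t)=L$. Hence $v=\psi_L(u)\in\w$ whenever $u\in\w$ (here one uses $\Phi\in\Delta_{2}$, which follows from $(\phi_3)$), and the chain rule for Sobolev functions gives, a.e.\ in $\Omega$,
\[
\nabla v=(1+s)|u|^{s}\,\nabla u\,\mathcal{X}_{\{|u|^{s}\le L\}}+L\,\nabla u\,\mathcal{X}_{\{|u|^{s}>L\}},
\]
the two terms living on disjoint sets.

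Next I would square this identity and rearrange. On $\{|u|^{s}>L\}$ one has $|\nabla v|^{2}=L^{2}|\nabla u|^{2}=\min\{|u|^{2s},L^{2}\}|\nabla u|^{2}$; on $\{|u|^{s}\le L\}$, using $(1+s)^{2}=1+(2s+s^{2})$ and the fact that $|u|^{2s}=\min\{|u|^{2s},L^{2}\}$ there,
\[
|\nabla v|^{2}=(1+s)^{2}|u|^{2s}|\nabla u|^{2}=\min\{|u|^{2s},L^{2}\}|\nabla u|^{2}+(2s+s^{2})|u|^{2s}|\nabla u|^{2}.
\]
Combining the two cases yields the exact a.e.\ identity
\[
|\nabla v|^{2}=\min\{|u|^{2s},L^{2}\}|\nabla u|^{2}+(2s+s^{2})|u|^{2s}|\nabla u|^{2}\,\mathcal{X}_{\{|u|^{s}\le L\}}.
\]

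Finally I would raise this to the power $\ell/2$ and invoke the elementary bound $(a+b)^{\theta}\le c(a^{\theta}+b^{\theta})$ for $a,b\ge 0$, valid with $c=1$ when $0<\theta\le 1$ (subadditivity of $t\mapsto t^{\theta}$) and with $c=2^{\theta-1}$ when $\theta\ge 1$ (convexity); taking $\theta=\ell/2$ produces precisely the constant $c_{\ell}$ of the statement. Applying it with $a=\min\{|u|^{2s},L^{2}\}|\nabla u|^{2}$ and $b=(2s+s^{2})|u|^{2s}|\nabla u|^{2}\mathcal{X}_{\{|u|^{s}\le L\}}$, and simplifying via $\left(\min\{|u|^{2s},L^{2}\}\right)^{\ell/2}=\min\{|u|^{\ell s},L^{\ell}\}$ and $\mathcal{X}^{\ell/2}=\mathcal{X}$, gives \eqref{emj1}. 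I do not expect a genuine obstacle; the only step needing a little care—and it is minor—is the a.e.\ validity of the chain-rule formula for $\nabla v$ on $\{u=0\}$ (relevant only if $s<1$) and across the level set $\{|u|^{s}=L\}$, which is settled by the standard fact that the weak gradient of a Sobolev function vanishes a.e.\ on its level sets; the remainder is bookkeeping over the two disjoint regions together with the convexity estimate.
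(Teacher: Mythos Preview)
Your argument is correct and follows essentially the same route as the paper: both first establish the exact identity $|\nabla v|^{2}=\min\{|u|^{2s},L^{2}\}|\nabla u|^{2}+(2s+s^{2})|u|^{2s}|\nabla u|^{2}\mathcal{X}_{\{|u|^{s}\le L\}}$ and then raise it to the power $\ell/2$, splitting into the concave case $\ell\le 2$ (constant $1$) and the convex case $\ell>2$ (constant $2^{\ell/2-1}$). You simply supply more detail on the chain-rule computation that the paper summarizes as ``a simple computation''.
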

\begin{proof}
	A simple computation leads to
	 \begin{equation*}
	  |\nabla (u\min\{|u|^{s},L \})|^{2}=|\nabla u|^{2}\min\{|u|^{2s},L^{2} \}+(2s+s^{2})|\nabla u|^{2}|u|^{2s}\mathcal{X}_{\{|u|^{s}\leq L \}}.
	 \end{equation*}
	Now we divide into two cases. Namely, we consider the cases $\ell\leq 2$ and $\ell>2$. If $\ell\leq 2$, then the function $t\mapsto t^{\ell/2}$ is concave. Thus, one can deduce
	 \begin{equation}\label{emj2}
	  |\nabla (u\min\{|u|^{s},L \})|^{\ell}\leq |\nabla u|^{\ell}\min\{|u|^{\ell s},L^{\ell} \}+(2s+s^{2})^{\ell/2}|\nabla u|^{\ell}|u|^{\ell s}\mathcal{X}_{\{|u|^{s}\leq L \}}.
	 \end{equation}
	If $\ell>2$, then the function $t\mapsto t^{\ell/2}$ is convex. Thus, we obtain
	 \begin{equation}\label{emj3}
	 	|\nabla (u\min\{|u|^{s},L \})|^{\ell}\leq 2^{\ell/2-1}\left\{|\nabla u|^{\ell}\min\{|u|^{\ell s},L^{\ell} \}+(2s+s^{2})^{\ell/2}|\nabla u|^{\ell}|u|^{\ell s}\mathcal{X}_{\{|u|^{s}\leq L \}}\right\}.
	 \end{equation}
	Combining \eqref{emj2} and \eqref{emj3} we get \eqref{emj1}. This finishes the proof. 
\end{proof}

\begin{proof}[{\bf Proof of Theorem~\ref{B}}] Here we shall prove the item $(i)$. The proof for the  case $(ii)$ is a direct adaptation of the proof of case $(i)$ together with a similar procedure of the proof of Theorem~\ref{A}~$(ii)$. Let $u\in W^{1,\Phi}_{0}(\Omega)$ be a weak solution of \eqref{prob} and $L>0$. Notice that
 \[
  \nabla (u\min\{|u|^{\ell s},L^{\ell} \})=\min\{|u|^{\ell s},L^{\ell} \}\nabla u+\ell s|u|^{\ell s}\mathcal{X}_{\{|u|^{s}\leq L\}}\nabla u.
 \]
Thus, by taking $\varphi:=u\min\{|u|^{\ell s},L^{\ell} \}$ as test function in \eqref{prob}, one can deduce that
 \begin{align}\label{emj4}
 	\int\phi(|\nabla u|)|\nabla u|^{2}\min\{|u|^{\ell s},L^{\ell} \}+\ell s\int_{\{|u|^{s}\leq L \}}\phi(|\nabla u|)|\nabla u|^{2}|u|^{\ell s}\leq \int\left[a(x)(1+2|u|^{\ell}\min\{|u|^{\ell s},L^{\ell} \})\right],
 \end{align}
for $L>0$ sufficiently large. Let us define $h:(0,+\infty)\rightarrow\mathbb{R}$ given by
 \begin{equation}\label{g}
  h(s):=c_{\ell}\max\left\{1,\frac{(2s+s^{2})^{\ell/2}}{\ell s}\right\}.
 \end{equation}
Notice that $h(s)\leq c_{\ell}(1+s)^{\ell}$. In view of Lemma~\ref{est}, Proposition \ref{lema_naru}, estimate \eqref{emj4} and H\"{o}lder inequality one has
 \begin{align*}
  \int_{\{|\nabla u|\geq 1 \}}|\nabla(u\min\{|u|^{s},L \})|^{\ell} & \leq c_{\ell}(1+s)^{\ell}\left\{\int_{\{|\nabla u|\geq 1 \}}|\nabla u|^{\ell}\min\{|u|^{\ell s},L^{\ell} \}+\ell s\int_{\{|\nabla u|\geq 1 \}\cap\{|u|^{s}\leq L \}}|\nabla u|^{\ell}|u|^{\ell s}\right\}\\
                                                                           & \leq \frac{c_{\ell}(1+s)^{\ell}}{\phi(1)}\left\{\int\phi(|\nabla u|)|\nabla u|^{2}\min\{|u|^{\ell s},L^{\ell} \}+\ell s\int_{\{|u|^{s}\leq L \}}\phi(|\nabla u|)|\nabla u|^{2}|u|^{\ell s}\right\}\\
                                                                           & \leq \frac{c_{\ell}(1+s)^{\ell}}{\phi(1)}\int a(x)(1+2|u|^{\ell}\min\{|u|^{\ell s},L^{\ell} \})\\
                                                                           & \leq \frac{c_{\ell}(1+s)^{\ell}}{\phi(1)}\left(\|a\|_{1}+2k\|u\|_{\ell(s+1)}^{\ell(s+1)}\right)+2\frac{c_{\ell}(1+s)^{\ell}}{\phi(1)}\left(\int_{\{|a|\geq k \}}|a(x)|^{N/\ell} \right)^{\ell/N}\|u \min\{|u|^{s},L \}\|_{\ell^{*}}^{\ell},
 \end{align*}
where $k=k(s)>0$ is a parameter which depends on $s$. Let $S>0$ be the sharp constant of the continuous embedding $W^{1,\ell}_{0}(\Omega)\hookrightarrow L^{\ell^{*}}(\Omega)$. Taking into account the above estimates we obtain that 
 \[
  \int_{\{|\nabla u|\geq 1 \}}|\nabla(u\min\{|u|^{s},L \})|^{\ell}\leq \frac{c_{\ell}(1+s)^{\ell}}{\phi(1)}\left(\|a\|_{1}+2k\|u\|_{\ell(s+1)}^{\ell(s+1)}\right)+2S\frac{c_{\ell}(1+s)^{\ell}}{\phi(1)}\left(\int_{\{|a|\geq k \}}|a(x)|^{N/\ell} \right)^{\ell/N}\|u \min\{|u|^{s},L \}\|^{\ell}.
 \]
Since $a\in L^{N/\ell}(\Omega)$, for given $s>0$ there exists $k=k(s)>0$ such that
 \[
  2S\frac{c_{\ell}(1+s)^{\ell}}{\phi(1)}\left(\int_{\{|a|\geq k \}}|a(x)|^{N/\ell} \right)^{\ell/N}=\frac{1}{2}.
 \]
Hence, we obtain
 \begin{equation}\label{emj5}
  \frac{1}{2}\int|\nabla(u\min\{|u|^{s},L \})|^{\ell}\leq \frac{c_{\ell}(1+s)^{\ell}}{\phi(1)}\left(\|a\|_{1}+2k\|u\|_{\ell(s+1)}^{\ell(s+1)}\right)+\int_{\{|\nabla u|\leq 1 \}}|\nabla (u \min\{|u|^{s},L \})|^{\ell}.
 \end{equation} 
By using Lemma~\ref{est} we deduce
 \begin{align}\label{emj6}
  \int_{\{|\nabla u|\leq 1 \}}|\nabla (u \min\{|u|^{s},L \})|^{\ell} & \leq c_{\ell}\left\{\int_{\{|\nabla u|\leq 1 \}}\min\{|u|^{\ell s},L^{\ell} \}+(2s+s^{2})^{\ell/2}\int_{\{|\nabla u|\leq 1 \}\cap\{|u|^{s}\leq 1 \}}|u|^{\ell s} \right\}\nonumber\\
                                                                     & \leq c_{\ell}\left\{1+(2s+s^{2})^{\ell/2} \right\}\|u\|_{\ell s}^{\ell s}\nonumber\\
                                                                     & \leq c_{\ell}\left\{1+(2s+s^{2})^{\ell/2} \right\}|\Omega|^{1-\frac{\ell s}{\ell(s+1)}}\|u\|_{\ell(s+1)}^{\ell s}.
 \end{align}
Combining \eqref{emj5}, \eqref{emj6} and taking the limit $L\rightarrow+\infty$ we conclude that
 \begin{equation*}
  \|\nabla (|u|^{s}u)\|_{\ell}^{\ell}\leq \tilde{c}_{\ell}(a,\Omega)(1+s)^{\ell}\left\{1+k\|u\|_{\ell(s+1)}^{\ell(s+1)}+\|u\|_{\ell(s+1)}^{\ell s} \right\}.
 \end{equation*}
Now, using the embedding $W_0^{1,\ell}(\Omega)\hookrightarrow L^{\ell^*}(\Omega)$ we get
 \begin{equation}\label{emj7}
 	\|u\|_{\ell^{*}(s+1)}^{\ell(s+1)}\leq \tilde{c}_{\ell}(a,\Omega)(1+s)^{\ell}\left\{1+k\|u\|_{\ell(s+1)}^{\ell(s+1)}+\|u\|_{\ell(s+1)}^{\ell s} \right\}.
 \end{equation}
In light of the general estimate \eqref{emj7}, we are able to start the iteration procedure considering
 \[
  s_{0}=0 \quad \mbox{and} \quad s_{i}+1=(s_{i-1}+1)\frac{N}{N-\ell}, \quad i=1,2,...
 \]
Therefore, for each $q\in[1,+\infty)$, there exists $i\in\mathbb{N}$ such that 
 \[
  \ell\left(\frac{N}{N-\ell} \right)^{i}>q \quad \mbox{and} \quad u\in L^{\ell\left(\frac{N}{N-\ell} \right)^{i}}(\Omega).
 \]
This estimate finishes the proof of Theorem~\ref{B}.
\end{proof}

\begin{proof}[{\bf Proof of Corollary \ref{cor1}}]
	Notice that
	 \[
	  |g(x,u)|\leq a(x)(1+|u|^{\alpha-1}), \quad \mbox{where} \hspace{0,2cm} a(x):=\frac{C(|u|^{\alpha-1}+|u|^{\alpha^{*}-1})}{1+|u|^{\alpha-1}}\in L^{N/\alpha}(\Omega),
	 \]
	where $\alpha\in\{\ell,m \}$. Therefore, the desired result follows immediately from Theorem~\ref{B}.
\end{proof}

\begin{proof}[{\bf Proof of Theorem \ref{thrm}}]
   Now we shall prove the case where \eqref{ell} holds true. In this case, assuming also that $\alpha=m$, the proof follows by slight modifications as in the previous results. In light of Corollary \ref{cor1}, we have that $u\in L^{q}(\Omega)$, for all $q\in[1,\infty)$. Let $h:(0,+\infty)\rightarrow\mathbb{R}$ be the function defined in \eqref{g}. Let us define $\varphi:=u\min\{|u|^{\ell s},L^{\ell} \}$. For the reader convenience, we introduce the notation $\psi:=u\min\{|u|^{s},L \}$. By using $\varphi$ as test function in \eqref{prob} and similar calculations to the proof of Theorem~\ref{B} we deduce also that 
    \[
     \int_{\{|\nabla u|\geq 1 \}} |\nabla\psi|^{\ell}\leq \tilde{c}_{\ell}(1+s)^{\ell}\int |\psi|^{\ell}(1+|u|^{\ell^{*}-\ell}).
    \]
   Thus, we deduce that
    \[
     \|\nabla\psi\|_{\ell}^{\ell} \leq \tilde{c}_{\ell}(1+s)^{\ell}\int |\psi|^{\ell}(1+|u|^{\ell^{*}-\ell})+c_{\ell}|\Omega|^{1-\frac{\ell s}{\ell(s+1)}}[1+(2s+s^{2})^{\ell/2}]\|u\|_{\ell(s+1)}^{\ell s},
    \]
   which implies
    \begin{equation*}
    	\|\nabla\psi\|_{\ell}^{\ell} \leq \bar{c}_{\ell}(\Omega)(1+s)^{\ell}\left\{\int |\psi|^{\ell}(1+|u|^{\ell^{*}-\ell})+\|u\|_{\ell(s+1)}^{\ell s}\right\},
    \end{equation*}
 where $\bar{c}_{\ell}(\Omega):=\max\{\tilde{c}_{\ell},c_{\ell}\sup_{s\geq0}|\Omega|^{1-\frac{\ell s}{\ell(s+1)}} \}$. Let $r:=[(\ell^{*})^{2}-\ell\ell^{*}+\ell^{2}]/(\ell\ell^{*})>1$ be a fixed number. By using H\"{o}lder inequality with $\ell^{*}/(\ell r)$ and $(\ell^{2})^{2}/\ell(\ell^{*}-\ell)$ we obtain
  \[
   S\|\nabla\psi\|_{\ell^{*}}^{\ell} \leq \bar{c}_{\ell}(\Omega)(1+s)^{\ell}\left\{\|\psi\|_{\ell^{*}/r}^{\ell}\|u\|_{(\ell^{*})^{2}/\ell}+\|\psi\|_{\ell}^{\ell}+\|u\|_{\ell(s+1)}^{\ell s}\right\},
  \]
 where $S>0$ is the sharp constant of the continuous embedding $W^{1,\ell}_{0}(\Omega)\hookrightarrow L^{\ell^{*}}(\Omega)$. By using Lebesgue Dominated Convergence Theorem taking $L\rightarrow+\infty$ we get
  \begin{equation}\label{emj8}
   \|u\|_{\ell^{*}(s+1)}^{\ell(s+1)} \leq \bar{c}_{\ell}(\Omega)(1+s)^{\ell}\left\{\|u\|_{\frac{\ell^{*}}{r}(s+1)}^{\ell(s+1)}\|u\|_{(\ell^{*})^{2}/\ell}+\|\psi\|_{\ell}^{\ell}+\|u\|_{\ell(s+1)}^{\ell s}\right\}
  \end{equation}
 In view of the continuous embedding $L^{\frac{\ell^{*}}{r}(s+1)}(\Omega)\hookrightarrow L^{\ell(s+1)}(\Omega)$ we deduce the following estimates
  \begin{equation}\label{emj9}
  	\|u\|_{\ell(s+1)}^{\ell(s+1)}\leq |\Omega|^{1-\frac{\ell r}{\ell^{*}}}\|u\|_{\frac{\ell^{*}}{r}(s+1)}^{\ell(s+1)},
  \end{equation}
  and
  \begin{equation}\label{emj10}
  	\|u\|_{\ell(s+1)}^{\ell s}\leq |\Omega|^{\left(\frac{1}{\ell}-\frac{r}{\ell^{*}} \right)\frac{\ell s}{s+1}}\|u\|_{\frac{\ell^{*}}{r}(s+1)}^{\ell s}.
  \end{equation}
 Combining \eqref{emj8}, \eqref{emj9} and \eqref{emj10} we conclude that
  \[
   \|u\|_{\ell^{*}(s+1)}\leq k^{1/(s+1)}(1+s)^{\ell/(s+1)}\max\left\{\|u\|_{\frac{\ell^{*}}{r}(s+1)},1 \right\}, \quad \mbox{for all} \hspace{0,2cm} s\in(0,+\infty).
  \]
 At this stage, choosing $s+1=r$, one has
  \[
   \|u\|_{\ell^{*}r}\leq k^{1/r}r^{\ell/r}\max\left\{\|u\|_{\ell^{*}},1 \right\}.
  \]
 Now, we continue the iteration by taking $s+1=r^{2}$. Thus, we obtain the following estimate 
  \[
   \|u\|_{\ell^{*}r^{2}}\leq k^{1/r^{2}}r^{\ell/r^{2}}\max\left\{k^{1/r}r^{\ell/r}\max\left\{ \|u\|_{\ell^{*}},1\right\},1 \right\}.
  \]
 By iterating similarly to \cite[p. 3344]{pucci2}, we conclude that $u\in L^{\infty}(\Omega)$.
 This ends the proof.  
\end{proof}

\begin{proof}[{\bf Proof of Theorem \ref{smp}}]
	In view of Theorem~\ref{thrm} it follows that $u\in L^{\infty}(\Omega)$. Hence, by using \cite[Theorem 1.7]{lieberman} we conclude that $u\in C^{1,\alpha}(\overline{\Omega})$, for some $\alpha\in(0,1)$. Let us define $H(t):=t^{2}\phi(t)-\Phi(t)$. It is not hard to check that $H$ is an increasing function and satisfies $\Phi^{-1}(s)\geq H^{-1}((\ell-1)s)$ for any $s \geq 0$. Taking into account \eqref{ell} we deduce that
	 \[
	  H^{-1}(G(x,t))\leq C\Phi^{-1}(|t|^{\ell}+|t|^{\ell^{*}}),  x \in \Omega, t \in \mathbb{R}.
	 \] 
	The last assertion implies that
	 \[
	  \int_{0}^{\delta}\frac{\mathrm{d}s}{H^{-1}(G(x,s))}=\infty
	 \]
holds true for some $\delta > 0$. Therefore, we are able to use the Strong Maximum Principle given in \cite[Theorem 1.1]{pucci} showing that $u>0$ in $\Omega$. This ends the proof. 
\end{proof}

\vspace{0,5cm}

\section*{Appendix}

In this appendix we recall some basic concepts on Orlicz and Orlicz-Sobolev spaces. For a more complete discussion on this subject we refer the readers to \cite{adams,rao}. Let $\Theta:\mathbb{R}\rightarrow[0,+\infty)$ be convex and continuous. It is important to say that $\Theta$ is a $N$-function if $\Theta$ satisfies the following conditions:
\begin{itemize}
	\item[(i)] $\Theta$ is even;
	\item[(ii)] $\displaystyle\lim_{t\rightarrow 0}\frac{\Theta(t)}{t}=0$;
	\item[(iii)] $\displaystyle\lim_{t\rightarrow \infty}\frac{\Theta(t)}{t}=\infty$;
	\item[(iv)] $\Theta(t)>0$, for all $t>0$.	
\end{itemize}
Notice that by using assumptions $(\phi_1)$ and $(\phi_2)$ we conclude that $\Phi$, defined in \eqref{phi}, is a $N$-function. Henceforth, $\Phi$ and $\Psi$ denote $N$-functions. 

Recall also that a $N$-function satisfies the $\Delta_{2}$-condition if there exists $K>0$ such that
\[
\Phi(2t)\leq K\Phi(t), \quad \mbox{for all} \hspace{0,2cm} t\geq0.
\]
We denote by $\tilde{\Phi}$ the complementary function of $\Phi$, which is given by the Legendre's transformation
\[
\tilde{\Phi}(s)=\max_{t\geq0}\{st-\Phi(t)\}, \quad \mbox{for all} \hspace{0,2cm} s\geq0.
\]
Let $\Omega\subset\mathbb{R}^{N}$ be an open subset and $\Phi:[0,+\infty)\rightarrow[0,+\infty)$ be fixed. The set
\[
\mathcal{L}_{\Phi}(\Omega):=\left\{u:\Omega\rightarrow\mathbb{R} \mbox{ measurable } : \int_{\Omega}\Phi(|u(x)|)<+\infty\right\},
\]
is the so-called \textit{Orlicz class}. Let us suppose that $\Phi$ is a Young function generated by $\varphi$, that is
\[
\Phi(t)=\int_{0}^{t}s \varphi(s)\,\mathrm{d}s.
\]
Let us define $\tilde{\varphi}(t):=\sup_{s\varphi(s)\leq t}s$, for $t\geq0$. The function $\tilde{\Phi}$ can be rewritten as follows
\[
\tilde{\Phi}(t)=\int_{0}^{t} s \tilde{\varphi}(s)\,\mathrm{d}s.
\]
The function $\tilde{\Phi}$ is called the \textit{complementary function} to $\Phi$. The set
\[
L_{\Phi}(\Omega):=\left\{u:\Omega\rightarrow\mathbb{R}:\int_\Omega \Phi\left(\frac{|u(x)|}{\lambda}\right)<\infty, \mbox{ for some } \lambda>0 \right\},
\]
is called \textit{Orlicz space}. The usual norm on $L_{\Phi}(\Omega)$ is the \textit{Luxemburg norm}
\[
\|u\|_\Phi=\inf\left\{\lambda>0~|~\int_\Omega \Phi\left(\frac{|u(x)|}{\lambda}\right) \leq 1\right\}.
\]
We recall that the \textit{Orlicz-Sobolev space} $W^{1,\Phi}(\Omega)$ is defined by
\[
W^{1,\Phi}(\Omega):=\left\{u\in L_{\Phi}(\Omega):\exists f_{i}\in L_{\Phi}(\Omega), \int_{\Omega}u\frac{\partial \phi}{\partial x_{i}}=-\int_{\Omega}f_{i}\phi, \ \forall \phi\in C^{\infty}_{0}(\Omega), \ i=1,...,N\right\}.
\]
The Orlicz-Sobolev norm of $ W^{1, \Phi}(\Omega)$ is given by
\[
\displaystyle \|u\|_{1,\Phi}=\|u\|_\Phi+\sum_{i=1}^N\left\|\frac{\partial u}{\partial x_i}\right\|_\Phi.
\]
Since $\Phi$ satisfies the $\Delta_{2}$-condition, we define by $\w$  the closure of $C_0^{\infty}(\Omega)$ with respect to the Orlicz-Sobolev norm of $W^{1,\Phi}(\Omega)$. By the Poincar\'e Inequality (see e.g.  \cite{gossez-Czech}), that is, the inequality
\[
\int_\Omega\Phi(u)\leq \int_\Omega\Phi(2d_{\Omega}|\nabla u|),
\]
where $d_{\Omega}=\mbox{diam}(\Omega)$, we can conclude that
\[
\|u\|_\Phi\leq 2d_{\Omega}\|\nabla u\|_\Phi, \quad \mbox{for all} \hspace{0,2cm} u\in \w.
\]
As a consequence, we have that  $\|u\| :=\|\nabla u\|_\Phi$ defines a norm in $\w$ which is equivalent to $\|\cdot\|_{1,\Phi}$. The spaces $L_{\Phi}(\Omega)$, $W^{1,\Phi}(\Omega)$ and $W^{1,\Phi}_{0}(\Omega)$ are separable and reflexive when $\Phi$ and $\tilde{\Phi}$ satisfy the $\Delta_{2}$-condition. 

Recall also that $\Psi$ dominates $\Phi$ near infinity, in short we write $\Phi<\Psi$, if there exist positive constants $t_{0}$ and $k$ such that 
 \[
  \Phi(t)\leq \Psi(kt), \quad \mbox{for all} \hspace{0,2cm} t\geq t_{0}.
 \]
If $\Phi<\Psi$ and $\Psi<\Phi$, then we say that $\Phi$ and $\Psi$ are equivalent, and we denote by $\Phi\approx \Psi$.  Let $\Phi_*$ be the inverse of the function
$$
t\in(0,\infty)\mapsto\int_0^t\frac{\Phi^{-1}(s)}{s^{\frac{N+1}{N}}}ds
$$
which extends to $\mathbb{R}$ by  $\Phi_*(t)=\Phi_*(-t)$ for  $t\leq 0$. We say that $\Phi$ increases essentially more slowly than $\Psi$ near infinity, in short we write $\Phi<<\Psi$, if and only if for every positive constant $k$ one has
 \[
  \lim_{t\rightarrow \infty}\frac{\Phi(kt)}{\Psi(t)}=0.
 \]
It is important to emphasize that if $\Phi<\Psi<<\Phi_*$, then the following embedding
$$
\displaystyle \w \hookrightarrow L_\Psi(\Omega),
$$
is compact. In particular, since $\Phi<<\Phi_*$ (cf. \cite[Lemma 4.14]{Gz1}), we have that $\w$ is compactly embedded into $L_\Phi(\Omega)$.
Furthermore, we have that $W_0^{1,\Phi}(\Omega)$ is continuous embedded into $L_{\Phi_*}(\Omega)$. Finally, we recall the following Lemma due to N. Fukagai et al. \cite{Fuk_1} which can be written in the following way:
\begin{proposition}\label{lema_naru}
	Assume that $(\phi_1)-(\phi_3)$ hold and set
	$$
	\zeta_0(t)=\min\{t^\ell,t^m\}~~\mbox{and}~~ \zeta_1(t)=\max\{t^\ell,t^m\},~~ t\geq 0.
	$$
	Then $\Phi$ satisfies the following estimates:
	$$
	\zeta_0(t)\Phi(\rho)\leq\Phi(\rho t)\leq \zeta_1(t)\Phi(\rho), \quad \rho, t> 0,
	$$
	$$
	\zeta_0(\|u\|_{\Phi})\leq\int_\Omega\Phi(u)dx\leq \zeta_1(\|u\|_{\Phi}), \quad u\in L_{\Phi}(\Omega).
	$$
\end{proposition}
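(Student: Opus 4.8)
The plan is to reduce both displayed inequalities to a single pointwise bound, namely $\ell \le s\Phi'(s)/\Phi(s) \le m$ for every $s>0$, which is the analytic content of $(\phi_3)$ once it is rephrased in terms of $\Phi$ rather than $\phi$. After that, the first line of the proposition follows by integrating a logarithmic derivative, and the second line follows by inserting the first into the integral defining the modular and invoking the Luxemburg norm.

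\textbf{Step 1: the key bound.} Since $\Phi'(s)=s\phi(s)$ and $\Phi(0)=0$, I would introduce $F(s):=s\Phi'(s)-\ell\Phi(s)=s^2\phi(s)-\ell\Phi(s)$ and $G(s):=m\Phi(s)-s\Phi'(s)$, both of which vanish at $s=0$. A short computation gives $F'(s)=s\phi(s)\big[(s\phi(s))'/\phi(s)-(\ell-1)\big]$ and $G'(s)=s\phi(s)\big[(m-1)-(s\phi(s))'/\phi(s)\big]$, and both bracketed terms are nonnegative by $(\phi_3)$. Hence $F\ge 0$ and $G\ge 0$ on $(0,\infty)$, that is $\ell\le s\Phi'(s)/\Phi(s)\le m$, equivalently $\ell/s\le (\ln\Phi)'(s)\le m/s$.

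\textbf{Step 2: the submultiplicative estimate.} Fix $\rho,t>0$. If $t\ge 1$, integrating $\ell/s\le(\ln\Phi)'(s)\le m/s$ from $\rho$ to $\rho t$ yields $\ell\ln t\le\ln\big(\Phi(\rho t)/\Phi(\rho)\big)\le m\ln t$, hence $t^{\ell}\Phi(\rho)\le\Phi(\rho t)\le t^{m}\Phi(\rho)$; if $0<t<1$, integrating from $\rho t$ to $\rho$ reverses the roles of the exponents and gives $t^{m}\Phi(\rho)\le\Phi(\rho t)\le t^{\ell}\Phi(\rho)$. In all cases $\zeta_0(t)\Phi(\rho)\le\Phi(\rho t)\le\zeta_1(t)\Phi(\rho)$, which is the first assertion; in particular, taking $t=2$ re-derives $\Phi\in\Delta_2$. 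For the second assertion, let $u\in L_\Phi(\Omega)$, the case $u\equiv 0$ being trivial, and set $\lambda:=\|u\|_\Phi>0$. Applying Step 2 pointwise a.e. with argument $\rho=|u(x)|/\lambda$ and dilation $t=\lambda$ gives $\zeta_0(\lambda)\,\Phi\big(|u(x)|/\lambda\big)\le\Phi(|u(x)|)\le\zeta_1(\lambda)\,\Phi\big(|u(x)|/\lambda\big)$; integrating over $\Omega$ and using that $\Phi\in\Delta_2$ forces $\int_\Omega\Phi(|u|/\lambda)\,dx=1$ by definition of the Luxemburg norm, one obtains $\zeta_0(\|u\|_\Phi)\le\int_\Omega\Phi(u)\,dx\le\zeta_1(\|u\|_\Phi)$.

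The genuinely delicate point is Step 1: one must recognize that $(\phi_3)$, phrased through $(t\phi(t))'/\phi(t)$, has to be translated into control of $s\Phi'(s)/\Phi(s)$, and choosing the auxiliary functions $F,G$ that vanish at the origin is exactly what makes $(\phi_3)$ integrate up. Everything else is routine; the only care needed is that the identity $\int_\Omega\Phi(|u|/\|u\|_\Phi)=1$ (rather than merely $\le 1$) is what allows the lower bound in the modular estimate to survive, and this identity is secured by the $\Delta_2$ property established in Step 2.
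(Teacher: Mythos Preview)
Your proof is correct and follows the standard route: the paper itself does not prove this proposition but merely attributes it to Fukagai, Ito and Narukawa \cite{Fuk_1}, and your argument is essentially the one found there. The reduction of $(\phi_3)$ to the bound $\ell\le s\Phi'(s)/\Phi(s)\le m$, followed by integration of the logarithmic derivative and then the modular argument using $\int_\Omega\Phi(|u|/\|u\|_\Phi)\,dx=1$ under $\Delta_2$, is exactly the expected proof.
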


For the function $\Phi_*$ we obtain similar estimates given by the following result. 

\begin{proposition}\label{lema_naru_*}
	Assume that  $\phi$ satisfies $(\phi_1)-(\phi_3)$.  Set
	$$
	\zeta_2(t)=\min\{t^{\ell^*},t^{m^*}\},~~ \zeta_3(t)=\max\{t^{\ell^*},t^{m^*}\},~~  t\geq 0
	$$
	where $1<\ell,m<N$ and $m^* = \frac{mN}{N-m}$, $\ell^* = \frac{\ell N}{N-\ell}$.  Then
	$$
	\ell^*\leq\frac{t\Phi'_*(t)}{\Phi_*(t)}\leq m^*,~t>0,
	$$
	$$
	\zeta_2(t)\Phi_*(\rho)\leq\Phi_*(\rho t)\leq \zeta_3(t)\Phi_*(\rho),~~ \rho, t> 0,
	$$
	$$
	\zeta_2(\|u\|_{\Phi_{*}})\leq\int_\Omega\Phi_{*}(u)dx\leq \zeta_3(\|u\|_{\Phi_*}),~ u\in L_{\Phi_*}(\Omega).
	$$
\end{proposition}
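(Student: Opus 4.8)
The plan is to reduce all three assertions to the single differential inequality $\ell^{*}\leq t\Phi_{*}'(t)/\Phi_{*}(t)\leq m^{*}$ and then to run the standard $N$-function arguments that also underlie Proposition~\ref{lema_naru}. Throughout I use that $\Phi$ is a $C^{1}$, strictly increasing $N$-function, so that $\Phi^{-1}$, the map $t\mapsto G(t):=\int_{0}^{t}\Phi^{-1}(s)\,s^{-(N+1)/N}\,\mathrm{d}s$, and its inverse $\Phi_{*}=G^{-1}$ are all well defined and $C^{1}$ on $(0,\infty)$; integrability of the integrand defining $G$ at the origin is guaranteed by $m<N$.

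\emph{Step 1: the exponent bounds for $\Phi_{*}$.} First I would invert the estimate $\min\{t^{\ell},t^{m}\}\Phi(\rho)\leq\Phi(\rho t)\leq\max\{t^{\ell},t^{m}\}\Phi(\rho)$ from Proposition~\ref{lema_naru} to obtain
\[
\Phi^{-1}(x)\,(s/x)^{1/\ell}\leq\Phi^{-1}(s)\leq\Phi^{-1}(x)\,(s/x)^{1/m}\qquad(0<s\leq x).
\]
Differentiating $\Phi_{*}(G(t))=t$ gives $\Phi_{*}'(G(t))=t^{(N+1)/N}/\Phi^{-1}(t)$, hence for $\tau=G(t)$,
\[
\frac{\tau\Phi_{*}'(\tau)}{\Phi_{*}(\tau)}=\frac{G(t)}{t}\cdot\frac{t^{(N+1)/N}}{\Phi^{-1}(t)}=\frac{G(t)\,t^{1/N}}{\Phi^{-1}(t)}.
\]
Substituting the two bounds on $\Phi^{-1}$ into $G(t)$ and using $\int_{0}^{t}s^{1/p-(N+1)/N}\,\mathrm{d}s=\tfrac{pN}{N-p}\,t^{1/p-1/N}$ for $p\in\{\ell,m\}$, I get $\ell^{*}\Phi^{-1}(t)t^{-1/N}\leq G(t)\leq m^{*}\Phi^{-1}(t)t^{-1/N}$, which is exactly the first displayed claim.

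\emph{Steps 2 and 3.} With $\ell^{*}\leq t\Phi_{*}'(t)/\Phi_{*}(t)\leq m^{*}$ available, integrating $\big(\log\Phi_{*}\big)'$ from $\rho$ to $\rho t$ yields $t^{\ell^{*}}\Phi_{*}(\rho)\leq\Phi_{*}(\rho t)\leq t^{m^{*}}\Phi_{*}(\rho)$ when $t\geq1$ and the reversed inequalities when $0<t<1$; together these are precisely $\zeta_{2}(t)\Phi_{*}(\rho)\leq\Phi_{*}(\rho t)\leq\zeta_{3}(t)\Phi_{*}(\rho)$. Finally, for $u\in L_{\Phi_{*}}(\Omega)$, $u\neq0$, set $\lambda=\|u\|_{\Phi_{*}}$; applying the second claim pointwise with $\rho=|u(x)|/\lambda$ and $t=\lambda$ gives $\zeta_{2}(\lambda)\,\Phi_{*}(|u|/\lambda)\leq\Phi_{*}(|u|)\leq\zeta_{3}(\lambda)\,\Phi_{*}(|u|/\lambda)$, and integrating over $\Omega$ together with the identity $\int_{\Omega}\Phi_{*}(|u|/\lambda)=1$ (valid since $\Phi_{*}\in\Delta_{2}$, which follows from $t\Phi_{*}'/\Phi_{*}\leq m^{*}$) yields the last claim.

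The main obstacle is Step~1: producing the \emph{sharp} constants $\ell^{*},m^{*}$ requires controlling $\Phi^{-1}$ both near $0$ and near $\infty$ and tracking the exponents in the integral defining $G$ carefully — in particular checking that $1/m-(N+1)/N+1=1/m-1/N>0$, which is exactly where $m<N$ is used. Once the exponent bounds for $\Phi_{*}$ are in place, Steps~2 and~3 are the same routine computations used for Proposition~\ref{lema_naru}.
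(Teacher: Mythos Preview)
The paper does not actually prove this proposition: it is stated in the Appendix without proof, as the $\Phi_{*}$-analogue of Proposition~\ref{lema_naru}, which itself is only quoted from Fukagai--Ito--Narukawa~\cite{Fuk_1}. Your argument is correct and is the natural derivation: the inversion of the $\zeta_{0},\zeta_{1}$ bounds to control $\Phi^{-1}$, the identity $\tau\Phi_{*}'(\tau)/\Phi_{*}(\tau)=G(t)\,t^{1/N}/\Phi^{-1}(t)$ via $G=\Phi_{*}^{-1}$, and the evaluation $\int_{0}^{t}s^{1/p-(N+1)/N}\,\mathrm{d}s=p^{*}\,t^{1/p-1/N}$ that produces exactly the constants $\ell^{*},m^{*}$ all check out, and Steps~2 and~3 are indeed the same routine integrations as for Proposition~\ref{lema_naru}. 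So there is nothing to compare against; your proof supplies what the paper left to the literature.
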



\medskip
\bigskip



\medskip

\bigskip

\end{document}